\definecolor{Gray}{gray}{0.9}
\newcommand{\subscr}[2]{#1_{\textup{#2}}}
 \newcommand{\setdef}[2]{\{#1
  \; | \; #2\}}
\newcommand{\map}[3]{#1: #2 \rightarrow #3}
\newcommand{\mcA}{\mathcal{A}}
\newcommand{\Bt}{B^\top}
\newcommand{\mcP}{\mathcal{P}}
\newcommand{\norm}[2]{\left\|#1\right\|_{\mathrm{#2}}}
\newcommand{\Biggnorm}[2]{\Bigg\|#1\Bigg\|_{\mathrm{#2}}}
\newcommand{\FCmap}{\mathcal{F}_{\complex}}
\newcommand{\Fmap}{\mathcal{F}}
\newcommand\oprocendsymbol{\hbox{$\triangle$}}
\newcommand\oprocend{\relax\ifmmode\else\unskip\hfill\fi\oprocendsymbol}
\DeclareSymbolFont{bbold}{U}{bbold}{m}{n}
\DeclareSymbolFontAlphabet{\mathbbold}{bbold}
\newcommand{\vect}[1]{\mathbbold{#1}}
\newcommand{\vectorones}[1][]{\vect{1}_{#1}}
\newcommand{\vectorzeros}[1][]{\vect{0}_{#1}}
\newcommand{\scirc}{\raise1pt\hbox{$\,\scriptstyle\circ\,$}}
\newcommand{\real}{\mathbb{R}}
\newcommand{\complex}{\mathbb{C}}
\newcommand{\torus}{\mathbb{T}} 
\newcommand{\Z}{\mathbb{Z}}
\newtheorem{remark}{Remark}
 \newcommand{\imagunit}{\mathrm{i}}
\newcommand{\prjcut}{\mathcal{P}_{\mathrm{cut}}}
\newcommand{\prjcyc}{\mathcal{P}_{\mathrm{cyc}}}
\DeclareMathOperator{\diag}{diag}
\DeclareMathOperator{\Ker}{\mathrm{Ker}}
\DeclareMathOperator{\Img}{\mathrm{Img}}
\newcommand{\ErdosRenyi}{Erd\H{o}s\textendash{}R\'{e}nyi\xspace}
\newcommand{\TheTitle}{Synchronization Tests for the Kuramoto Model
  via Power Series} \renewcommand{\TheTitle}{Synchronization of
  Kuramoto Oscillators: \newline Inverse Taylor
  Expansions}
\newcommand{\TheAuthors}{Saber Jafarpour and Elizabeth Y. Huang and Francesco Bullo}
\title{{\TheTitle}\thanks{This work was supported in part by the
    U.S. Department of Energy (DOE) Solar Energy Technologies Office
    under Contract No. DE-EE0000-1583.}}
\author{Saber Jafarpour\thanks{Center for Control, Dynamical Systems, and
    Computation, University of California, Santa Barbara
    (\email{saber.jafarpour@engineering.ucsb.edu}).} \and Elizabeth
  Y. Huang\thanks{Center for Control, Dynamical Systems, and Computation,
    University of California, Santa Barbara
    (\email{eyhuang@engineering.ucsb.edu}).}  \and Francesco
  Bullo\thanks{Department of Mechanical Engineering and Center for Control,
    Dynamical Systems, and Computation, University of California, Santa
    Barbara (\email{bullo@engineering.ucsb.edu})} }
\ifpdf \hypersetup{ pdftitle={\TheTitle}, pdfauthor={\TheAuthors} }
\begin{document}

\maketitle

\begin{abstract} 
  Synchronization in networks of coupled oscillators is a widely
  studied topic with extensive scientific and engineering
  applications.  In this paper, we study the frequency synchronization
  problem for networks of Kuramoto oscillators with arbitrary topology
  and heterogeneous edge weights.  We propose a novel equivalent
  transcription for the equilibrium synchronization equation. Using
  this transcription, we develop a power series expansion to compute
  the synchronized solution of the Kuramoto model as well as a
  sufficient condition for the strong convergence of this series
  expansion.  Truncating the power series provides (i) an efficient
  approximation scheme for computing the synchronized solution, and
  (ii) a simple-to-check, statistically-correct hierarchy of
  increasingly accurate synchronization tests.  This hierarchy of
  tests provides a theoretical foundation for and generalizes the
  best-known approximate synchronization test in the literature.  Our
  numerical experiments illustrate the accuracy and the computational
  efficiency of the truncated series approximation compared to
  existing iterative methods and existing synchronization tests.
\end{abstract}

\begin{keywords}
  Kuramoto oscillators, frequency synchronization, synchronization manifold,
  Taylor series, power networks
\end{keywords}

\begin{AMS}
34D06, 34C15, 93D20, 37C25, 37M20 
\end{AMS}


\section{Introduction}

Collective synchronization is an interesting behaviour which lies at
the heart of various natural phenomena. The celebrated Kuramoto
model~\cite{YK:75} is one of the simplest models for studying
synchronization in a network of coupled oscillators. Kuramoto model
has been successfully used to model the synchronization behaviour of a
wide range of physical, chemical, and biological
system~\cite{JAA-LLB-CJPV-FR-RS:05}. Examples include the power
grids \cite{DJH-GC:06,FD-FB:09z}, automated vehicle coordination
\cite{DJK-PL-KAM-TJ:08, RS-DP-NEL:07}, pacemakers in
heart~\cite{YW-FJD:13}, clock synchronization \cite{OS-US-YBN-SS:08},
and neural networks \cite{GBE-NK:91}; see
also~\cite[Chapter~13]{FB:18} for additional examples.  One of the
most interesting types of synchronization is frequency
synchronization, where all oscillators reach the same rotational
frequency with possibly different phases. It is well-known that the
Kuramoto model can exhibit a transition from incoherence to frequency
synchronization. For many applications, such as power networks, it is
important to have an accurate estimate of this transition to
synchronization. This is essential, particularly as the grid is being
pushed closer to its maximum capacity due to increases in the load
demand and penetration of renewable energy units. Finding sharp
conditions to determine when this transition happens continues to be a
challenging as well as a critical problem.

\subsection*{Literature Review} 

The problem of finding conditions for existence of a stable synchronized
solution for the Kuramoto model of coupled oscillators has been studied
extensively in the literature.  For complete graphs with homogeneous
weights, the order parameter is used to implicitly determine the exact
critical coupling needed for a synchronized
solution~\cite{DA-JAR:04,REM-SHS:05,MV-OM:08}. For acyclic graphs with
heterogeneous weights, a necessary and sufficient condition is developed
for synchronization of the heterogeneous Kuramoto
model~\cite{FD-MC-FB:11v-pnas}. In addition, Lyapunov analysis applied to
the complete graph is used in~\cite{NC-MWS:08} to give a sufficient
condition and in~\cite{FD-FB:10w} to give an explicit necessary and
sufficient condition for existence of a synchronized solution.  However for
general topology graphs, such a complete characterization of frequency
synchronization does not exist.  For general graphs with heterogeneous
weights, several necessary conditions and sufficient conditions for
existence of stable synchronized solution have been reported in the
literature. \cite{CJT-OJMS:72a} requires sufficiently large nodal
degrees relative to the natural frequencies, \cite{NA-SG:13} uses the
cutset in the graph, and \cite{AJ-NM-MB:04} states that the
algebraic connectivity must be sufficiently large compared to the
difference in natural frequencies of connected oscillators. Recently, a
novel cutset projection operator has been introduced to rigorously prove a
simple-to-check, sufficient condition for synchronization of Kuramoto
model~\cite{SJ-FB:16h}. Using numerous simulations, it is shown that this
new sufficient condition scales better to large networks~\cite{SJ-FB:16h}.

Despite these deep results in the literature, the existing synchronization
conditions usually provide conservative estimates for the synchronization
threshold. In an effort to come closer to finding the exact synchronization
threshold, \cite{FD-MC-FB:11v-pnas}~and~\cite{EYH-SJ-FB:18d} introduce a
statistically accurate approximate test for synchronization that depends on
the network parameters and topology derived from the linearized Kuramoto
map and the converging power series expansion of the phase angles of the
Kuramoto oscillators, respectively.

If existence of a frequency synchronized solution can be guaranteed,
then the next step is computing the synchronized solutions.  A common
method to approximate the solution is to linearize the equations. This
will result in studying the equations of the form
$\omega=L{\bf{\theta}}$, where $L$ is the Laplacian matrix of the
network~\cite{RK-SS:16,DAS-SHT:14,BS-JJ-OA:09}. The angles $\theta$
can be approximately solved very efficiently, even for extremely
large, sparse graphs~\cite[Theorem 3.1]{NKV:13}. However, when phase
differences of the oscillators are large, this linear approximation is
not very accurate.

In order to compute the synchronization manifold of the nonlinear
Kuramoto equations, one can employ iterative numerical algorithms such
as Newton\textendash{}Raphson or
Gauss\textendash{}Seidel~\cite{BS-OA:74,WFT-CEH:67,AFG-GWS:57}. Unfortunately,
these algorithms do not guarantee convergence to the synchronized
solutions and failure of these algorithms could be due to numerical
instability, an initialization issue, or non-existence of the
solution. Another approach is to use numerical polynomial homotopy
continuation (NPHC). It is guaranteed that NPHC will find all stable
and unstable manifolds of the Kuramoto model, but this method is not
computationally tractable for large networks; \cite{DM-NSD-FD:15} uses
NPHC to study the homogeneous Kuramoto model for particular graph
topologies with up to $18$ nodes.  Finally, \cite{CW-NR-CG-MSB:15}
gives an approximate analytical solution for stable synchronization
manifolds using the order parameter. However, this approximation
scheme is only applicable to the uniform-weight Kuramoto model with
all-to-all connections.

A wide range of methods for finding synchronized solutions of the
Kuramoto model stem from the power network literature, where different
techniques are used to find the solutions of the AC power flow
equations. Here, we only review two of these approaches. The first
approach is called Holomorphic Embedding Load-Flow Method (HELM) and
has been proposed to find all the solutions of power flow
equations~\cite{AT:12}. While HELM is based on advanced results and
concepts from complex analysis, its numerical implementation is
recursive and straightforward~\cite{AT:12,SR-YF-DJT-MKS:16}.  However,
HELM is reported to be much slower than the Newton\textendash{}Raphson
methods~\cite{SR-YF-DJT-MKS:16}.  The second approach is the
optimization approach, whereas an optimal power flow problem (OPF) is
used to solve for the AC power flow equations. The OPF problems have
been studied extensively in the power network
literature, e.g., see~\cite{JAM-RA-MEE:99-1,
  JAM-RA-MEE:99-2,DKM-FD-HS-SHL-SC-RB-JL:17}. Thus, one can use the
numerical algorithms for the optimization problem to find the
synchronized solution of the Kuramoto model. Unfortunately, due to the
non-convex nature of the OPFs, these algorithms usually result in an
approximation of the synchronized solution.

\subsection*{Contribution} 
The contributions of this paper are both theoretical and
computational. From a theoretical viewpoint, first, we review
important properties of the Kuramoto model of coupled oscillators and,
as a minor contribution, we provide a rigorous proof for the following
well-known folk theorem: frequency synchronization is equivalent with
the existence of a stable synchronization manifold
(see~\cite{REM-SHS:05} and~\cite{MV-OM:08} for statement of this
result without proof).  Second, by introducing the notions of edge
vectors and flow vectors in graphs, we propose four equivalent
transcriptions for the synchronization manifold of the Kuramoto model:
node, flow, constrained edge, and unconstrained edge balance
equations. While the first three formulations have already been
studied in the literature (see~\cite{FD-MC-FB:11v-pnas}
and~\cite{SJ-FB:16h}), the unconstrained edge balance equations
provide a novel important characterization of the synchronization
manifold. Our main technical results are (1) a sufficient condition
for existence of a unique solution for unconstrained edge balance
equations and (2) a recursive expression for each term of the Taylor
series expansion for this solution of the unconstrained edge balance
equations. Additionally, we prove that, if our simple-to-check
sufficient condition is satisfied, then the Taylor series expansion
for the solution converges strongly.  We also provide an algorithm to
symbolically compute all terms of the expansion.  Third and final,
using the one-to-one correspondence between solutions of the
unconstrained edge balance equations and synchronized solutions of the
Kuramoto model, we propose a power series expansion for the
synchronized solutions of the Kuramoto model and an estimate on the
region of convergence of the power series.

From a computational viewpoint, first, we propose a method to approximate
the synchronization manifold of the Kuramoto model using the truncated
power series. We present several numerical experiments using IEEE test
cases and random graphs to illustrate (1) the accuracy of the truncated
series and (2) the computational efficiency of the new methods for
computing the synchronization manifold.  We show that the seventh order approximate
method has low absolute error when applied to IEEE test cases with weakly
coupled oscillators.  The truncated series, up to the seventh order, have
comparable computational efficiency to Newton\textendash{}Raphson when solving for
solutions with static graph topology and multiple natural frequencies, or
power injections.
Second, based on our novel power series approach, we
  propose a hierarchy of approximate tests for synchronization of the
  Kuramoto model; our approach provides a theoretical basis for and
  generalize the state-of-the-art approximate synchronization test in
  the literature~\cite{FD-MC-FB:11v-pnas}. With numerical analysis,
we verify the accuracy of our family of approximate tests for several random graphs and numerous IEEE test cases.  In each of these cases,
we show that our new approximate tests are a significant improvement
compared to the best-known approximate condition given
in~\cite{FD-MC-FB:11v-pnas}.


Finally, we compare this paper with our preliminary conference
article~\cite{EYH-SJ-FB:18d}. In short, this paper presents a substantially
more complete and comprehensive treatment of the power series approach to
synchronization of Kuramoto oscillators. Specifically,
while~\cite{EYH-SJ-FB:18d} presents a power series expansion for nodal phase
angles, this paper develops a novel power series expansion for the flows in
the network. Using the Banach Fixed-Point Theorem, we provide an estimate
on the domain of convergence of the power series which is substantially
larger than the estimates given in~\cite{EYH-SJ-FB:18d}. Moreover, our
numerical analysis shows that the hierarchy of approximate synchronization
tests obtained by truncating this power series is more accurate than the
estimate tests proposed in~\cite{EYH-SJ-FB:18d}.

\subsection*{Paper organization} 
In Section~\ref{sec:prelim}, we give preliminaries and notation used in the
paper. In Section~\ref{sec:kuramoto}-\ref{sec:auxiliary} we review the
Kuramoto model, frequency synchronization, and give several equivalent
formulations of the algebraic Kuramoto equation.
Sections~\ref{sec:cut-flow-balance-solution} and~\ref{sec:syncOfKuramoto}
contain the paper's main theoretical results and a family of approximate synchronization tests.
Finally, Section~\ref{sec:numerical} contains numerical experiments analyzing the approximate synchronization tests and efficiency of computation methods for the synchronization manifold.

\section{Preliminaries and notation}\label{sec:prelim}

\subsection*{Vectors and functions}
Let $\Z_{\ge 0}$, $\real^n$, and $\complex^n$ denote the set of
non-negative integers, the $n$-dimensional real Euclidean space, and the
$n$-dimensional complex Euclidean space, respectively. For $n\in
\Z_{\ge 0}$, let $n!! = \prod_{k=0}^{\lceil\frac{n}{2}\rceil-1}
(n-2k)$ denote the double factorial.  For $r>0$ and $\mathbf{x}\in
\real^n$, the real polydisk with center $\mathbf{x}$ and radius $r$ is
\begin{align*}
\mathrm{D}_n(\mathbf{x},r) =\setdef{\mathbf{y}\in
  \real^n}{\|\mathbf{x}-\mathbf{y}\|_{\infty}\le r}.
\end{align*} 
Similarly, for $r>0$ and $\mathbf{z}\in \complex^n$, the complex polydisk
with center $\mathbf{z}$ and radius $r$ is 
\begin{align*}
\mathrm{D}^{\complex}_n(\mathbf{z},r) =\setdef{\mathbf{w}\in
  \complex^n}{\|\mathbf{z}-\mathbf{w}\|_{\infty}\le r}.
\end{align*} 
Let $ \vectorones[n] $ and $ \vectorzeros[n] $ be $n$-dimensional column vectors of ones and zeros respectively.
For $ \mathbf{x} = (x_1,\dots,x_n)^{\top} \in\complex^{n} $, let $
\sin(\mathbf{x}) = (\sin(x_1),\dots,\sin(x_n))^{\top} $ and
$\diag(\mathbf{x})$ be the $n\times n$ diagonal matrix with
$\left(\diag(\mathbf{x})\right)_{ii} = x_i$, for every $i\in \{1,\ldots,n\}$. For $\mathbf{x} = (x_1,\dots,x_n)^{\top} \in\complex^{n} $ with
$\|\mathbf{x}\|_{\infty}\le 1$, let $\arcsin(\mathbf{x}) =
(\arcsin(x_1),\dots,\arcsin(x_n))^{\top} $, where 
\begin{align*}
\arcsin(r) = \sum_{i=0}^{\infty} \frac{(2i-1)!!}{(2i)!! (2i+1)} r^{2i+1}.
\end{align*}
For every $n\in \mathbb{N}$, we denote the $n$-torus by $\mathbb{T}^n$. For
every $s\in [0,2\pi)$, the clockwise rotation of $\theta\in \mathbb{T}^n$
  by the angle $s$ is the function
  $\map{\mathrm{rot}_s}{\mathbb{T}^n}{\mathbb{T}^n}$ defined by
\begin{equation*}
\mathrm{rot}_s(\theta)=(\theta_1+s,\ldots,\theta_n+s)^{\top},\qquad\text{for }
\theta\in \mathbb{T}^n.
\end{equation*}
Using the rotation function, one defines an equivalence relation
$\sim$ on the $n$-torus $\mathbb{T}^n$ as follows: For every two points $\theta,\eta\in
\mathbb{T}^n$, we say $\theta\sim \eta$ if there exists $s\in
[0,2\pi)$ such that $\theta = \mathrm{rot}_s(\eta)$. For every $\theta\in \mathbb{T}^n$, the equivalence
class of $\theta$ is denoted by $[\theta]=\left\{\mathrm{rot}_s(\theta)\mid s\in
  [0,2\pi)\right\}$. The quotient space of $\mathbb{T}^n$ under the
equivalence relation $\sim$ is denoted by $[\mathbb{T}^n]$.

\subsection*{Algebraic graph theory}
Let $G$ be a weighted undirected connected graph with the node set
$\mathcal{N}=\{1,\ldots,n\}$ and the edge set $\mathcal{E}\subseteq
\mathcal{N}\times \mathcal{N}$ with $m$ elements. We assume that $G$ has no
self-loops and the weights of the edges are described by the nonnegative,
symmetric adjacency matrix $ A\in\real^{n\times n} $. The Laplacian matrix
of the graph $G$ is $ L=\diag(A\vectorones[n])-A \in\real^{n\times n}$.
Define the diagonal edge weight matrix by
$\mcA=\diag(a_{ij\in\mathcal{E}})\in\real^{m\times m}$. It is known that the
Laplacian is $ L = B\mcA\Bt $. Since $ L $ is singular, we use the
Moore\textendash{}Penrose pseudoinverse $ L^{\dagger} $ which has the
following properties: $ LL^{\dagger}L=L $, $
L^{\dagger}LL^{\dagger}=L^{\dagger} $, $ L^{\dagger}L=(L^{\dagger}L)^{\top}
$, and $ LL^{\dagger}=(LL^{\dagger})^{\top} $. In addition, for a connected
graph $ L^{\dagger}L=LL^{\dagger}=I_n -
\frac{1}{n}\vectorones[n]^\top\vectorones[n] $. The \emph{weighted cutset
  projection matrix} $\prjcut$ is the oblique projection onto
$\Img(B^{\top})$ parallel to $\Ker(B\mcA)$ given by
\begin{align*}
  \prjcut=B^{\top}L^{\dagger}B\mcA.
\end{align*}
The weighted cutset projection matrix $\prjcut$ is idempotent, and $0$ and
$1$ are its eigenvalues with algebraic (and geometric) multiplicity $m-n+1$ and
$n-1$, respectively. Additional properties of $ \prjcut$ are
in~\cite[Theorem 5]{SJ-FB:16h}. Similarly, the \emph{weighted cycle
  projection matrix} $\prjcyc$ is the oblique projection onto
$\Ker(B\mcA)$ parallel to $\Img(B^{\top})$ given by
\begin{align*}
  \prjcyc=I_m - B^{\top}L^{\dagger}B\mcA.
\end{align*}

\subsection*{Analytic functions and power series}

A multi-index $\nu$ is a member of
$\left(\Z_{>0}\right)^n$. For every $x\in \mathbb{C}^n$, we
define $x^{\nu}=x_1^{\nu_1}x_2^{\nu_2}\ldots x_n^{\nu_n}$.  For
$x_0\in \complex^n$, the formal expression
\begin{equation}\label{eq:5}
\sum_{\nu\in \left(\Z_{>0}\right)^n} a_{\nu}(x-x_0)^{\nu},
\end{equation}
where $a_{\nu}\in \mathbb{C}$, for every $\nu\in
\left(\Z_{>0}\right)^n$ is called a formal power series around
point $x_0$. 
The power series $\sum_{\nu\in \left(\Z_{>0}\right)^n}
a_{\nu}(x-x_0)^{\nu}$ \emph{converges strongly} at point $x$ if all
rearrangement of the terms of the series
$\sum_{\nu}a_{\nu}(x-x_0)^{\nu}$ converges.
For every $x_0\in \mathbb{C}^n$, the \emph{domain of convergence of}
\eqref{eq:5} around $x_0$ is defined as the set $\mathcal{C}_{x_0}$ of
all points $x\in \mathbb{C}^n$ such that the power series
$\sum_{\nu}a_{\nu}(x-x_0)^{\nu}$ converges strongly at point $x$.
While for $n=1$, one can show that the domain of convergence is an
open interval around $x_0$, for $n>1$ the domain of convergence of a
power series is not necessarily an open poly-disk around $x_0$. An open set $\Omega\subset \complex^n$ is a \emph{Reinhardt domain} if, for
every $\begin{pmatrix}z_1,\ldots,z_n\end{pmatrix}^{\top}\in \Omega$
and every $\begin{pmatrix}\theta_1,\ldots,\theta_n\end{pmatrix}^{\top}\in
\mathbb{T}^n$, we have $\begin{pmatrix}e^{i\theta_1}z_1,\ldots,e^{i\theta_n}z_n\end{pmatrix}^{\top}\in
\Omega$. The Reinhardt domains can be considered as the generalization of the
disks on the complex plane to higher dimensions.

\section{The heterogeneous Kuramoto model}\label{sec:kuramoto}

The Kuramoto model is a system of $n$ oscillators, where each
oscillator has a natural frequency $\omega_i\in \real$ and its state
is represented by a phase angle $\theta_i\in \mathbb{S}^1$. The
interconnection of these oscillators are described using a weighted
undirected connected graph $G$, with nodes
$\mathcal{N}=\{1,\ldots,n\}$, edges $\mathcal{E}\subseteq
\mathcal{N}\times \mathcal{N}$, and positive weights $a_{ij}=a_{ji}>0$. The dynamics for the heterogeneous
Kuramoto model is given by:
\begin{equation}\label{eq:2}
\dot{\theta}_i=\omega_i-\sum_{j=1}^{n}a_{ij}
\sin(\theta_i-\theta_j),\qquad\text{for } i\in\{1,\ldots,n\}.
\end{equation}
In matrix language, one can write this differential equations as:
\begin{equation}\label{eq:kuramoto_model}
\dot{\theta}=\omega-B\mcA\sin(B^{\top}\theta),
\end{equation}
where $\theta=(\theta_1,\theta_2,\ldots,\theta_n)^{\top}\in \mathbb{T}^n$
is the phase vector, $\omega=(\omega_1,\omega_2,\ldots,\omega_n)^{\top}\in
\real^n$ is the natural frequency vector, and $B$ is the incidence matrix
for the graph $G$. One can show that if
$\map{\theta}{\mathbb{R}_{\ge0}}{\mathbb{T}^n}$ is a solution for the
Kuramoto model~\eqref{eq:kuramoto_model} then, for every $s\in[0,2\pi)$,
  the curve $\map{\mathrm{rot}_s(\theta)}{\real_{\ge 0}}{\mathbb{T}^n}$ is
  also a solution of~\eqref{eq:kuramoto_model}. Therefore, for the rest of
  this paper, we consider the state space of the Kuramoto
  model~\eqref{eq:kuramoto_model} to be $[\mathbb{T}^n]$.

\begin{definition}[\textbf{Frequency synchronization}]
A solution $\map{\theta}{\mathbb{R}_{\ge 0}}{[\mathbb{T}^n]}$ of the
coupled oscillator model \eqref{eq:kuramoto_model} achieves \emph{frequency
  synchronization} if there exists a frequency $\omega_{\mathrm{syn}}\in
\real$ such that
\begin{equation*}
\lim_{t\to\infty} \dot{\theta}(t)=\omega_{\mathrm{syn}}\vect{1}_n.
\end{equation*}
\end{definition}
By summing all the equations in~\eqref{eq:2}, one
can show that if a solution of \eqref{eq:kuramoto_model} achieves
frequency synchronization then $\subscr{\omega}{syn} =
\tfrac{1}{n}\sum_{i=1}^{n} \omega_i$. Therefore, without loss of generality, we can assume that in the Kuramoto
model~\eqref{eq:kuramoto_model}, we have $\omega\in
\vect{1}^{\perp}_n$ and $\omega_{\mathrm{syn}}=0$.
\begin{definition}[\textbf{Synchronization manifold}]
Let $\theta^*$ be a solution of the algebraic equation
\begin{equation}\label{eq:synchronization_manifold}
\omega=B\mcA\sin(B^{\top}\theta^*).
\end{equation}
Then $[\theta^*]$ is called a \emph{synchronization manifold} for the
Kuramoto model \eqref{eq:kuramoto_model}.
\end{definition}
The following theorem reduces the problem of local frequency
synchronization in the Kuramoto model~\eqref{eq:kuramoto_model} to the
existence of a solution for the algebraic
equations~\eqref{eq:synchronization_manifold}. 
\begin{theorem}[\textbf{Characterization of frequency synchronization}]\label{thm:9}
For the heterogeneous Kuramoto model~\eqref{eq:kuramoto_model}
on graph $G$, the following statements are equivalent:
\begin{enumerate}
\item\label{p1:frequency} there exists an open set $U\in [\mathbb{T}^n]$
  such that every solution of the Kuramoto
  model~\eqref{eq:kuramoto_model} starting in set $U$ achieves frequency synchronization;
\item\label{p2:equilibrium} there exists a locally asymptotically
  stable synchronization manifold $[\theta^*]$ for~\eqref{eq:kuramoto_model}.
\end{enumerate} 
Additionally, if any of equivalent
conditions~\ref{p1:frequency} or~\ref{p2:equilibrium} holds, then, for
every $\theta(0)\in U$, we have $\lim_{t\to\infty} [\theta(t)] = [\theta^*]$.
\end{theorem}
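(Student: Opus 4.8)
The plan is to exploit the gradient structure of the Kuramoto model. The vector field in~\eqref{eq:kuramoto_model} is the negative gradient of the real-analytic potential
\[
E(\theta) = -\omega^{\top}\theta + \sum_{\{i,j\}\in\mathcal{E}} a_{ij}\bigl(1-\cos(\theta_i-\theta_j)\bigr),
\]
so that $\tfrac{d}{dt}E(\theta(t)) = -\|\dot\theta(t)\|_2^2 \le 0$ along every trajectory, with equality precisely at equilibria of~\eqref{eq:kuramoto_model}. The linear term prevents $E$ from being single-valued on $[\mathbb{T}^n]$, but $\mathrm{d}E$ (equivalently $\nabla E = -\dot\theta$) is globally well-defined there and $E$ is well-defined up to an additive constant on any simply connected subset. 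Since $[\mathbb{T}^n]$ is compact, every solution exists for all $t\ge0$ and has a nonempty, compact, connected $\omega$-limit set. I would then prove the two implications separately.

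For \ref{p2:equilibrium}$\Rightarrow$\ref{p1:frequency}: by the definition of a synchronization manifold $\theta^*$ is an equilibrium of~\eqref{eq:kuramoto_model}, and local asymptotic stability gives an open set $U\ni[\theta^*]$ inside its basin of attraction. For $\theta(0)\in U$ we have $[\theta(t)]\to[\theta^*]$, so by continuity of the vector field $\dot\theta(t) = \omega - B\mcA\sin(\Bt\theta(t)) \to \omega - B\mcA\sin(\Bt\theta^*) = \vectorzeros[n]$, i.e.\ frequency synchronization with $\omega_{\mathrm{syn}}=0$; this also establishes the last claim of the theorem.

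For \ref{p1:frequency}$\Rightarrow$\ref{p2:equilibrium}, fix $\theta(0)\in U$ with $\omega$-limit set $\Omega$. Because $\dot\theta(t)\to\vectorzeros[n]$, for any $p\in\Omega$ and $t_k\to\infty$ with $\theta(t_k)\to p$ continuity gives $\omega - B\mcA\sin(\Bt p)=\lim_k\dot\theta(t_k)=\vectorzeros[n]$, so $\Omega$ consists of equilibria; a branch of $E$ along the trajectory is monotone and, since $E$ is locally constant on the connected set $\Omega$ which the trajectory approaches, bounded, hence it converges to some $c$ with $E\equiv c$ on $\Omega$. I would then apply the {\L}ojasiewicz gradient inequality for the analytic $E$: on a neighborhood of the connected critical set $\Omega$ there are constants $C>0$, $\alpha\in[1/2,1)$ with $|E(\theta)-c|^{\alpha}\le C\|\nabla E(\theta)\|_2$; since $\mathrm{dist}(\theta(t),\Omega)\to0$, the standard {\L}ojasiewicz estimate bounds $\int_0^{\infty}\|\dot\theta(t)\|_2\,\mathrm{d}t<\infty$, so $\theta(t)$ converges to a single equilibrium, and running this for every $\theta(0)\in U$ shows every trajectory from $U$ converges to an equilibrium. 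To upgrade one such limit to a \emph{locally asymptotically stable} synchronization manifold, I would use that the Jacobian of~\eqref{eq:kuramoto_model} at any equilibrium $\theta^*$ is the symmetric weighted Laplacian $-B\mcA\diag(\cos(\Bt\theta^*))\Bt$: hence every equilibrium is either linearly unstable along $\vectorones[n]^{\perp}$ (with stable set a nowhere-dense lower-dimensional submanifold) or has negative semidefinite Jacobian on $\vectorones[n]^{\perp}$. Since the equilibrium set is the zero set of the real-analytic map $\theta\mapsto\omega-B\mcA\sin(\Bt\theta)$ on the compact $[\mathbb{T}^n]$, it has finitely many connected components, so by the Baire category theorem the initial conditions in $U$ limiting to some one component form a set that is not nowhere dense; this excludes that component being of the unstable type, and for a component of the other type the function $E-E(\theta^*)$ together with LaSalle's invariance principle shows the corresponding $[\theta^*]$ is locally asymptotically stable. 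With $U$ taken as its basin, the last claim follows.

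The delicate step is this last one. In the hyperbolic case --- generic in $\omega$, where all equilibria are isolated and each is a sink or has a lower-dimensional stable manifold --- some trajectory from $U$ must limit to a sink and we are done immediately. The work is in the genuinely degenerate case, where the limiting equilibrium has a zero Jacobian eigenvalue on $\vectorones[n]^{\perp}$; there one must lean on the analytic gradient structure (through {\L}ojasiewicz) and the symmetry of the Jacobian to ensure the attracting critical component contains a Lyapunov stable point rather than being merely attracting. All other ingredients --- the gradient identity, compactness of $[\mathbb{T}^n]$, the structure of $\omega$-limit sets, and the {\L}ojasiewicz length estimate --- are routine.
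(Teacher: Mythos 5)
Your route is genuinely different from the paper's: the paper proves \ref{p1:frequency}$\Rightarrow$\ref{p2:equilibrium} by an elementary compactness argument (sample the trajectory at integer times, extract a convergent subsequence, note the limit solves $\omega=B\mcA\sin(B^{\top}\theta^*)$, and assert stability from the fact that solutions from $U$ approach it), whereas you bring in the gradient structure, the {\L}ojasiewicz inequality, and a Baire-category count over the finitely many components of the equilibrium set. The convergence part of your argument (that $\omega$-limit sets consist of equilibria, the length estimate, single-point convergence of every trajectory from $U$) is sound and in fact more careful than the paper about where trajectories actually go.

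The step you yourself flag is, however, a genuine gap and not merely a delicate point: from ``a component $C$ of equilibria with negative semidefinite Jacobian attracts a non-meager set of initial conditions'' you cannot conclude, via $E$ and LaSalle, that some $[\theta^*]\subset C$ is locally asymptotically stable. For analytic gradient flows, Lyapunov stability of an equilibrium is equivalent to its being a local minimizer of the potential (Absil--Kurdyka), and nothing in your argument produces a local minimizer: a degenerate critical point can attract a set whose closure has nonempty interior while being neither a local minimum nor stable --- the scalar normal form $\dot{y}=-y^{2}$, with potential $y^{3}/3$ and Jacobian $0$ (hence negative semidefinite), attracts the half-line $y\ge 0$ --- so ``semidefinite Jacobian $+$ fat basin $+$ LaSalle'' does not deliver stability. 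Moreover, your argument does not rule out that the attracting component $C$ is a nontrivial continuum of equilibria beyond the rotation symmetry; in that case no single synchronization manifold $[\theta^*]$ can be locally \emph{asymptotically} stable, and at best one obtains asymptotic stability of a set, which is weaker than statement \ref{p2:equilibrium}. Closing the argument requires additional Kuramoto-specific input (e.g., isolation of equilibria / strict local minimality of $E$ inside the relevant region, of the kind the paper obtains from its cutset-projection results), or a reformulation of \ref{p2:equilibrium}. To be fair, the paper's own proof elides exactly this issue --- it infers ``locally asymptotically stable'' directly from attractivity and does not even show all solutions from $U$ share the same limit --- but as written your proposal does not close that hole either.
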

\begin{proof}
  Regarding $\ref{p1:frequency}\implies\ref{p2:equilibrium}$, if the
  solution achieves frequency synchronization, then
  $\lim_{t\to\infty}\dot{\theta}_i(t) = 0 = \lim_{t\to\infty}
  \Big(\omega_i-\sum_{j=1}^{n}a_{ij}\sin(\theta_i(t)-\theta_j(t))\Big)$ for
  all $i=\{1,...,n\}$.  Consider a sequence of natural numbers
  $k\in\mathbb{N}$ and the corresponding sequence $\theta(k)$ in
  $\mathbb{T}^n$. Since $\mathbb{T}^n$ is a compact metric space, it is
  sequentially compact~\cite[Theorem 28.2]{JM:00}. This means that there is
  a subsequence $\hat{k}$ such that $\theta_i(\hat{k})$ is convergent.
  Then $\lim_{\hat{k}\to\infty}\theta_i(\hat{k})$ exists and
  $\vectorzeros[n]=\omega-B\mcA\sin(B^{\top}\lim_{\hat{k}\to\infty}\theta(\hat{k}))$.
  Therefore $ \lim_{\hat{k}\to\infty}\theta(\hat{k}) $ is a synchronization
  manifold because it is a solution for
  equation~\eqref{eq:synchronization_manifold} and is locally
  asymptotically stable since all solutions starting in $U$ reach
  $\lim_{\hat{k}\to\infty}\theta(\hat{k})$.

Regarding $\ref{p2:equilibrium}\implies\ref{p1:frequency}$, by the
definition of local asymptotic stability, there exists some $ \delta>0 $
such that the open set $ U $ is defined to be
$U=\setdef{\theta(0)\in[\mathbb{T}^n]}{\|\theta(0)-\theta^*\|\leq\delta}$
where $[\theta^*]$ is the synchronization manifold.  Then for solutions
starting in $U$, $\lim_{t\to\infty}\dot{\theta}(t) =
\omega-B\mcA\sin(B^{\top}\theta^*) = 0$ so $[\theta^*]$ is also a frequency
synchronized solution for \eqref{eq:kuramoto_model}.
	
The last statement follows from the proofs of
$\ref{p1:frequency}\implies\ref{p2:equilibrium}$ and
$\ref{p2:equilibrium}\implies\ref{p1:frequency}$.
\end{proof}

In many application, such as power networks, not only is it important to
study the frequency synchronization of the Kuramoto oscillators but also it
is essential to bound the position of the synchronization manifold
$[\theta^*]$ due to some security constraints for the grid. An
important class of security constraints are thermal constraints which are
usually expressed as bounds on the geodesic distances
$|\theta^*_i-\theta^*_j|$, for $i,j\in \{1,\ldots,n\}$. The geodesic
distance $|\theta^*_i-\theta^*_j| $ is defined as the minimum of the clockwise and
counterclockwise arc lengths between the phase angles $\theta^*_i,\theta^*_j\in\torus^1$. Let $G$ be an undirected weighted connected graph with edge set $ \mathcal{E}$
and let $\gamma\in[0,\pi)$ We define the cohesive subset $\Delta^G(\gamma)
  \subseteq [\torus^{n}]$ by
  \begin{align*}
    \Delta^G(\gamma) =\setdef{[\theta]\in[\torus^n]}{|\theta_i-\theta_j|\leq\gamma,
      \text{for all }(i,j)\in\mathcal{E}}.
  \end{align*}
  For every $\gamma\in[0,\pi)$, we define
    the \emph{embedded cohesive subset} $S^G(\gamma)\subseteq [\torus^{n}]$
    by:
\begin{equation*}
  S^G(\gamma)=\setdef{[\mathrm{exp}(\imagunit\mathbf{x})]}{\mathbf{x}\in
    B^G(\gamma), s\in[0,2\pi)},
\end{equation*}
where $B^G(\gamma) =
\setdef{\mathbf{x}\in\vectorones[n]^{\perp}}{\norm{\Bt\mathbf{x}}{\infty}\leq\gamma}$. Note
that, in general, we have $S^G(\gamma)\subseteq\Delta^G(\gamma)$. We refer to~\cite{SJ-FB:16h}
for additional properties of embedded cohesive subset. In particular,
it is shown that $S^G(\gamma)$ is diffeomorphic with $B^G(\gamma)$,
for every $\gamma\in [0,\frac{\pi}{2})$~\cite[Theorem 8]{SJ-FB:16h}. Using this result, in the
rest of this paper we identify the set $S^G(\gamma)$ with
$B^G(\gamma)$.

\section{Equivalent transcriptions of the equilibrium manifold}
\label{sec:auxiliary}

Consider an undirected graph $G$ with vertex set
$\mathcal{N}=\{1,\ldots,n\}$ and edge set $\mathcal{E} \subseteq
\mathcal{N}\times \mathcal{N}$ with $|\mathcal{E}| = m$. We start by
introducing three vector spaces defined by $G$:
\begin{enumerate}
\item the \emph{node space} is $\real^{n}$; elements of this space are
  called \emph{node vectors};
\item the \emph{edge space} is $\real^m$; elements of this space are called
  \emph{edge vectors}; and
\item the \emph{flow vector space} is $\Img(B^{\top})$; elements of
  $\real^m$ belonging to this space are called by \emph{flow vectors}.
\end{enumerate}
It is easy to see that an edge vector $\mathbf{z}\in \real^m$ is a flow
vector if and only if there exists a node vector $\mathbf{x}\in \real^{n}$
such that $\mathbf{z} = B^{\top}\mathbf{x}$.

Next, we introduce four different balance equations on an undirected graph
$G$ with incidence matrix $B$, weight matrix $\mcA$, cutset projection
$\prjcut$, and cycle projection $\prjcyc$. Given a node vector $\omega\in
\vect{1}^{\perp}_n$, define the shorthand flow vector $\eta=
B^{\top}L^{\dagger}\omega\in \Img(B^{\top})$.  The \emph{node balance
  equations} in the unknown node vector $\mathbf{x}\in \vect{1}_n^{\perp}$
is
\begin{align}\label{eq:nodal_sync}
  \omega = B\mcA\sin(B^{\top}\mathbf{x}).
\end{align}
The \emph{flow balance equations} in the unknown flow vector $\mathbf{z}\in
\Img(B^{\top})$ is
\begin{align}\label{eq:edge_sync}
  \eta = \prjcut \sin(\mathbf{z}).
\end{align} 
The \emph{constrained edge balance equations} in the unknown edge vector $\psi\in
\real^m$ is
\begin{align}\label{eq:aux_sync}
  \begin{cases}
    \eta = \prjcut \psi, \\ \arcsin(\psi) \in \Img(B^{\top}),\quad
    \|\psi\|_{\infty}\le 1.
  \end{cases}
\end{align}
The \emph{unconstrained edge balance equations} in the unknown edge vector $\phi\in
\real^m$ is
\begin{align}\label{eq:cut-flow}
  \eta = \prjcut \phi + \prjcyc \arcsin(\phi),\quad \|\phi\|_{\infty}\le 1.
\end{align}

We now present equivalent characterizations for synchronization manifold of
the Kuramoto model~\eqref{eq:kuramoto_model}.

\begin{theorem}[\textbf{Characterization of synchronization manifold}]
  \label{thm:existence_uniqueness_S}
 Consider an undirected connected graph $G$ with incidence matrix $B$, weight matrix
 $\mcA$, cutset projection $\prjcut$, and cycle projection $\prjcyc$. Given
 a node vector $\omega\in \vect{1}^{\perp}_n$, define the shorthand $\eta=
 B^{\top}L^{\dagger}\omega\in \Img(B^{\top})$. Pick an angle $\gamma\in
 [0,\frac{\pi}{2})$. Then the following statements are equivalent:
\begin{enumerate}
\item\label{p1:stable} there exists a unique locally exponentially stable
  synchronization manifold $\mathbf{x}^*$ for the Kuramoto
  model~\eqref{eq:kuramoto_model} in $S^{G}(\gamma)$;
\item \label{p2:nodal_balance} the node balance
  equations~\eqref{eq:nodal_sync} have a unique solution $\mathbf{x}^*$ in
  $S^{G}(\gamma)$;
\item \label{p3:edge_balance} the flow balance
  equations~\eqref{eq:edge_sync} have a unique solution $\mathbf{z}^*\in
  \Img(B^{\top})$ with $\|\mathbf{z}^*\|_{\infty}\le \gamma$;
\item \label{p4:auxiliary} the constrained edge balance
  equations~\eqref{eq:aux_sync} have a unique solution $\psi^*\in \real^m$
  with $\|\psi^*\|_{\infty}\le \sin(\gamma)$;
\item\label{p5:cut-flow} the unconstrained edge balance
  equations~\eqref{eq:cut-flow} have a unique solution $\phi^*\in \real^m$
  with $\|\phi^*\|_{\infty}\le \sin(\gamma)$.
\end{enumerate} 
Moreover, if one of the above equivalent conditions hold, then
\begin{align*}
\mathbf{z}^* &= B^{\top}\mathbf{x}^*,\quad\text{and }\quad
\psi^* = \phi^* = \sin(B^{\top}\mathbf{x}^*).
\end{align*}
\end{theorem}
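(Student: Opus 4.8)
The plan is to establish the equivalences in a cycle, leveraging the linear-algebraic structure of the cutset and cycle projections and the earlier-cited diffeomorphism between $S^G(\gamma)$ and $B^G(\gamma)$ (\cite[Theorem 8]{SJ-FB:16h}). The natural order is $\ref{p1:stable}\Leftrightarrow\ref{p2:nodal_balance}$, then $\ref{p2:nodal_balance}\Leftrightarrow\ref{p3:edge_balance}$, then $\ref{p3:edge_balance}\Leftrightarrow\ref{p4:auxiliary}$, then $\ref{p4:auxiliary}\Leftrightarrow\ref{p5:cut-flow}$, carrying the correspondence formulas $\mathbf{z}^*=B^\top\mathbf{x}^*$ and $\psi^*=\phi^*=\sin(B^\top\mathbf{x}^*)$ along the way.

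First I would handle $\ref{p1:stable}\Leftrightarrow\ref{p2:nodal_balance}$. On $S^G(\gamma)$ with $\gamma<\pi/2$, the node balance equation is exactly the algebraic synchronization equation~\eqref{eq:synchronization_manifold}, so solutions correspond to synchronization manifolds. For the stability half I would appeal to the standard fact that the Jacobian of the Kuramoto vector field at an equilibrium is $-B\mcA\diag(\cos(B^\top\mathbf{x}^*))B^\top$, which is negative semidefinite with kernel exactly $\spn(\vectorones[n])$ whenever $\|B^\top\mathbf{x}^*\|_\infty<\pi/2$; on the reduced state space $[\torus^n]$ this gives local exponential stability, so that within $S^G(\gamma)$ existence of a solution to~\eqref{eq:nodal_sync} is equivalent to a (unique) locally exponentially stable synchronization manifold — uniqueness on $S^G(\gamma)$ being inherited from the $S^G(\gamma)\cong B^G(\gamma)$ identification, on which $\mathbf{x}\mapsto B^\top\mathbf{x}$ is injective.

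Next, $\ref{p2:nodal_balance}\Leftrightarrow\ref{p3:edge_balance}$: given $\mathbf{x}^*\in B^G(\gamma)$ solving~\eqref{eq:nodal_sync}, set $\mathbf{z}^*=B^\top\mathbf{x}^*\in\Img(B^\top)$ with $\|\mathbf{z}^*\|_\infty\le\gamma$; applying $B^\top L^\dagger$ to $\omega=B\mcA\sin(\mathbf{z}^*)$ and using $\prjcut=B^\top L^\dagger B\mcA$ gives $\eta=\prjcut\sin(\mathbf{z}^*)$. Conversely, from a flow solution $\mathbf{z}^*$ one recovers $\mathbf{x}^*$ with $B^\top\mathbf{x}^*=\mathbf{z}^*$ via $L^\dagger$; the point to check carefully is that $\eta=\prjcut\sin(\mathbf{z}^*)$ really does imply $\omega=B\mcA\sin(\mathbf{z}^*)$ and not merely its projection, which follows because both $\omega$ and $B\mcA\sin(\mathbf{z}^*)$ lie in $\vectorones[n]^\perp=\Img(B)$ and $B\mcA$ restricted appropriately is determined by $\prjcut$ through $B\mcA = B\mcA\prjcut$ on $\Img(B^\top)$ — I expect this bookkeeping between $\Img(B)$, $\Img(B^\top)$, $\Ker(B\mcA)$ and the pseudoinverse identities to be the main obstacle, since it is easy to lose a component. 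Uniqueness transfers because $B^\top(\cdot)$ is a bijection from $\vectorones[n]^\perp$ onto $\Img(B^\top)$.

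For $\ref{p3:edge_balance}\Leftrightarrow\ref{p4:auxiliary}$, substitute $\psi=\sin(\mathbf{z})$: then $\|\psi\|_\infty\le\sin\gamma$ iff $\|\mathbf{z}\|_\infty\le\gamma$ (using $\gamma<\pi/2$ so $\sin$ is a bijection on the relevant interval), $\arcsin(\psi)=\mathbf{z}\in\Img(B^\top)$ encodes the flow constraint, and $\eta=\prjcut\sin(\mathbf{z})=\prjcut\psi$ is the first equation of~\eqref{eq:aux_sync}; this is a bijective change of variables, so existence and uniqueness transfer directly. Finally, $\ref{p4:auxiliary}\Leftrightarrow\ref{p5:cut-flow}$: if $\psi^*$ solves~\eqref{eq:aux_sync} then $\arcsin(\psi^*)\in\Img(B^\top)=\Ker(\prjcyc)^{\perp}$... more precisely $\prjcyc$ annihilates $\Img(B^\top)$, so $\prjcyc\arcsin(\psi^*)=0$ and~\eqref{eq:cut-flow} reduces to $\eta=\prjcut\psi^*$, recovering~\eqref{eq:aux_sync}. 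For the converse, suppose $\phi^*$ solves~\eqref{eq:cut-flow} with $\|\phi^*\|_\infty\le\sin\gamma$; decompose $\arcsin(\phi^*)=\prjcut\arcsin(\phi^*)+\prjcyc\arcsin(\phi^*)$ and apply $\prjcyc$ to~\eqref{eq:cut-flow}: since $\prjcyc\eta=0$ (as $\eta\in\Img(B^\top)$) and $\prjcyc\prjcut=0$, $\prjcyc\prjcyc=\prjcyc$, we get $\prjcyc\arcsin(\phi^*)=0$, i.e. $\arcsin(\phi^*)\in\Img(B^\top)$, which is precisely the edge constraint; then~\eqref{eq:cut-flow} collapses to $\eta=\prjcut\phi^*$ and $\phi^*$ solves~\eqref{eq:aux_sync}. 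Uniqueness is immediate since the two equation systems have literally the same solution set inside $\{\|\cdot\|_\infty\le\sin\gamma\}$. The correspondence formulas then follow by chasing the substitutions backward: $\psi^*=\phi^*=\sin(\mathbf{z}^*)=\sin(B^\top\mathbf{x}^*)$.
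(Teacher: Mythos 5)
Your proposal is correct and follows essentially the same route as the paper: the same substitutions $\mathbf{z}=B^{\top}\mathbf{x}$, $\psi=\sin(\mathbf{z})$, and the same manipulations with $\prjcut$, $\prjcyc$ using $\Ker(\prjcyc)=\Img(B^{\top})$, $\prjcyc\prjcut=0$, and idempotency. The only differences are organizational — you close pairwise equivalences (supplying the direct step from flow balance back to node balance via left-multiplication by $B\mcA$, and the explicit Jacobian argument that $-B\mcA\diag(\cos(B^{\top}\mathbf{x}^*))B^{\top}$ is a negative semidefinite weighted Laplacian with kernel $\spn\{\vectorones[n]\}$ for $\gamma<\pi/2$), whereas the paper proves the one-way cycle \ref{p1:stable}$\Rightarrow\cdots\Rightarrow$\ref{p5:cut-flow}$\Rightarrow$\ref{p1:stable} and delegates stability and uniqueness in the last step to~\cite[Theorem~10]{SJ-FB:16h}.
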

\begin{proof}
The implications $\ref{p1:stable}\implies\ref{p2:nodal_balance}$
and $\ref{p2:nodal_balance}\implies\ref{p3:edge_balance}$ are easy
to show.

Regarding $\ref{p3:edge_balance}\implies\ref{p4:auxiliary}$, if
$\mathbf{z}^*\in \Img(B^{\top})$ is the unique solution to the flow balance
equations~\eqref{eq:edge_sync}, then $\phi^* = \sin(\mathbf{z}^*)\in
\real^m$ satisfies $\|\phi^*\|_{\infty}\le \sin(\gamma)$ and is a solution
for the edge balance equations~\eqref{eq:aux_sync}. Now, we show that
$\phi^*$ is the unique solution for the constrained edge balance
equations~\eqref{eq:aux_sync} such that $\|\phi^*\|_{\infty}\le
\sin(\gamma)$. Suppose that $\eta^*\ne \phi^*$ is another solution of the
constrained edge balance equations~\eqref{eq:aux_sync} satisfying
$\|\eta^*\|_{\infty}\le \sin(\gamma)$. Then, by the constrained edge balance
equations~\eqref{eq:aux_sync}, there exists $\mathbf{y}^*\in \Img(B^{\top})$ such
that $\mathbf{y}^*\ne \mathbf{z}^*$ and
$\arcsin(\eta^*)=\mathbf{z}^*$. This implies that $\|\mathbf{y}^*\| \le
\gamma$ and $\prjcut \sin(\mathbf{y}^*) =
B^{\top}L^{\dagger}\omega$. Therefore, $\mathbf{y}^*\in S^{G}(\gamma)$ and
satisfies the flow balance equations~\eqref{eq:edge_sync}. However, this is in
contradiction with the facts that $\mathbf{y}^*\ne \mathbf{z}^*$ and that
$\mathbf{z}^*\in\Img(B^{\top})$ is the unique solution of the flow balance
equations~\eqref{eq:edge_sync}.

Regarding $\ref{p4:auxiliary}\implies\ref{p5:cut-flow}$, if $\psi^*\in
\real^m$ is a solution of constrained edge balance equations~\eqref{eq:aux_sync} satisfying
$\|\psi^*\|_{\infty}\le \sin(\gamma)$, then
\begin{align}
B^{\top}L^{\dagger}\omega &= \prjcut \psi^*, \label{eq:first_aux}\\
\arcsin(\psi^*)&\in \Img(B^{\top}).\label{eq:second_aux}
\end{align}
Because $\Ker(\prjcyc)= \Img(B^{\top})$, the
inclusion~\eqref{eq:second_aux} implies that
\begin{align}\label{eq:third_aux}
  \prjcyc \arcsin(\psi^*) = 0. 
\end{align}
By adding equations~\eqref{eq:first_aux} and~\eqref{eq:third_aux}, we
obtain $B^{\top}L^{\dagger}\omega = \prjcut \psi^* + \prjcyc
\arcsin(\psi^*)$. This means that $\psi^*$ satisfies unconstrained edge
balance equations~\eqref{eq:cut-flow}.

Regarding $\ref{p5:cut-flow}\implies\ref{p1:stable}$, if $\phi^*\in\real^m$
solves the unconstrained edge balance equations~\eqref{eq:cut-flow}, then
\begin{align}\label{eq:fourth}
  B^{\top}L^{\dagger}\omega = \prjcut \phi^* + \prjcyc \arcsin(\phi^*).
\end{align}
Left-multiplying both sides of equations~\eqref{eq:fourth} by $\prjcut$ and using
the facts that $\prjcut B^{\top} = B^{\top}$, $\prjcut \prjcut = \prjcut$
and $\prjcut \prjcyc = \vect{0}_{m\times m}$, we obtain
\begin{align*}
  B^{\top}L^{\dagger}\omega = \prjcut \phi^*.
\end{align*}
Left-multiplying both side of the equations~\eqref{eq:fourth} by $\prjcyc$ we
obtain
\begin{align*}
  \prjcyc \arcsin(\phi^*)=\vect{0}_m.
\end{align*}
This last equality implies that $\arcsin(\phi^*)\in \Ker(\prjcyc) =
\Img(B^{\top})$. Thus, there exists a vector $\mathbf{x}^*\in
\vect{1}_n^{\perp}$ such that $\arcsin(\phi^*)=B^{\top}\mathbf{x}^*$. First,
note that $\prjcut \sin(B^{\top}\mathbf{x}^*) = B^{\top}L^{\dagger}\omega$
and, by multiplying both side of this equation by $B\mcA$, we obtain
\begin{align*}
  \omega = B\mcA\sin (B^{\top}\mathbf{x}^*).  
\end{align*}
Moreover, $\|\phi^*\|_{\infty}\le \gamma$. Thus, we have
$\|\arcsin(\phi^*)\|_{\infty} \le \gamma$ and
$\|B^{\top}\mathbf{x}^*\|_{\infty} \le \gamma$. This implies that
$\mathbf{x}^*\in \vect{1}_n^{\perp}$ is a synchronization manifold for the
Kuramoto model~\eqref{eq:kuramoto_model} in $S^{G}(\gamma)$. The uniqueness
follows from~\cite[Theorem~10, statement~(ii)]{SJ-FB:16h}.
\end{proof}

\section{Solvability of the unconstrained edge balance equations}\label{sec:cut-flow-balance-solution}


The unconstrained edge balance equations~\eqref{eq:cut-flow} allow us to
focus on a single analytic map whose inverse can be used in computing the
synchronization solutions of Kuramoto model. In this section, we study the
solvability of these equations and find their inverse on a suitable
domain. We start with relaxing the condition $\eta\in \Img(B^{\top})$ and
complexifing the equations~\eqref{eq:cut-flow}. This extension will allow us
to use the theory of several complex variables to find the Taylor series
expansion for the inversion of the complexified equations and prove the
strong convergence of the Taylor series. We then restrict back to real
domain and use the constrained $\eta\in\Img(B^{\top})$ to find the solutions
of the unconstrained edge balance equations~\eqref{eq:cut-flow}. We start with some useful definitions.  Given an undirected graph $G$ with
cutset projection $\prjcut$ and cycle projection $\prjcyc$, define the
\emph{complex edge balance map}
$\map{\FCmap}{\mathrm{D}^{\complex}(\vect{0}_m,\sin(\gamma))}{\complex^m}$
by
\begin{align*}
  \FCmap(\phi) = \prjcut \phi + \prjcyc\arcsin(\phi) 
\end{align*}
and the real edge balance map
$\map{\Fmap}{\mathrm{D}(\vect{0}_m,\sin(\gamma))}{\real^m}$ by
\begin{align*}
  \Fmap(\phi) = \prjcut \phi + \prjcyc\arcsin(\phi) .
\end{align*}
With this notation, the unconstrained edge balance
equations~\eqref{eq:cut-flow} read $\eta=\Fmap(\phi)$, together with the
constraints $\|\phi\|_{\infty}\le1$.

Next, we define the scalar function $\map{h}{\real_{\ge0}}{\real}$ by:
\begin{align*}
  h (x) = (x+1) \sqrt{1-\left(\frac{x}{x+1}\right)^2} - x \arccos\left(\frac{x}{x+1}\right).
\end{align*} 
The graph of function $h$ on the interval $[0,20]$ is shown in
Figure~\eqref{fig:function-h}.
\begin{figure}[!htb]\centering
  \includegraphics[width=.65\linewidth]{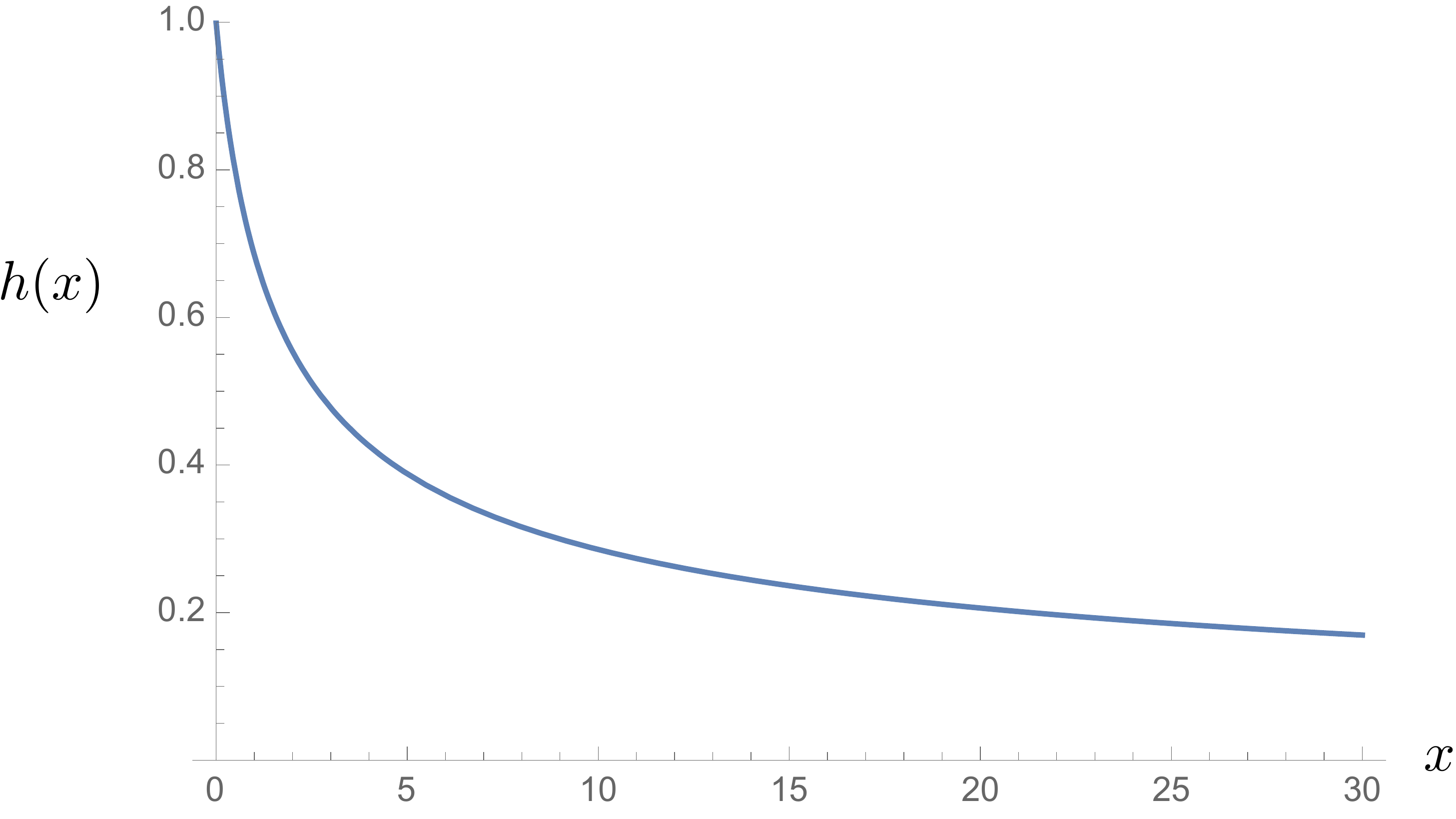}
  \caption{The graph of the monotonically-decreasing function $h$}
  \label{fig:function-h}
\end{figure}
Since $h$ is continuous and strictly monotonically-decreasing, its inverse
exists and is denoted by $\map{h^{-1}}{\real}{\real_{\geq 0}}$. Although we
do not have an analytical form for $h^{-1}(y)$, it is simple to compute
numerically.

We are now ready to provide an estimate on the image of the maps
$\FCmap$ and $\Fmap$ and to present a power series expansion for the
inverse maps $\FCmap^{-1}$ and $\Fmap^{-1}$ on suitable domains.

\begin{theorem}[\textbf{Properties of the complex edge balance map}]
  \label{thm:property-G-complex}
 Consider an undirected connected graph $G$ with cutset projection $\prjcut$ and
 cycle projection $\prjcyc$.  Select $\eta\in \real^m$ such that
 $\|\eta\|_{\infty}<h(\|\prjcyc\|_{\infty})$ and define $\gamma^*\in
 [0,\tfrac{\pi}{2})$ by
   \begin{align*}
     \gamma^*= \arccos\left(\frac{h^{-1}(\|\eta\|_{\infty})}{h^{-1}(\|\eta\|_{\infty}) + 1}\right).
   \end{align*}
Then the following statements holds:
\begin{enumerate}
\item\label{p1:existence-G} there exists a unique $\phi^*\in
  \mathrm{D}^{\complex}(\vect{0}_m,\sin(\gamma^*))$ such that
  $\FCmap(\phi^*)=\eta$; that is unconstrained edge balance equations have a
  unique solution;
\item\label{p3:inverse-G} there exists a holomorphic map
$\FCmap^{-1}:
\mathrm{D}^{\complex}(\vect{0}_m,\|\eta\|_{\infty}) \to \FCmap^{-1}(\mathrm{D}^{\complex}(\vect{0}_m,\|\eta\|_{\infty}))$
such that 
\begin{align*}
\FCmap^{-1}\scirc \FCmap(\phi) &= \phi
  ,\qquad\text{for all } \phi\in \FCmap^{-1}(\mathrm{D}^{\complex}(\vect{0}_m,\|\eta\|_{\infty})),\\
\FCmap \scirc \FCmap^{-1} (\xi) &=
  \xi,\qquad\text{for all } \xi \in \mathrm{D}^{\complex}(\vect{0}_m,\|\eta\|_{\infty});
\end{align*}
that is the edge balance map is invertible on $\mathrm{D}^{\complex}(\vect{0}_m,\|\eta\|_{\infty})$; 
\item\label{p4:power_series-G} the power series
  \begin{align*}
    \sum_{i=0}^{\infty} A_{2i+1}(\eta)=A_1(\eta)+A_3(\eta)+A_5(\eta)+\ldots,
  \end{align*}
  converges strongly to $\FCmap^{-1}(\eta)$, where, for every $i\in
  \Z_{\ge 0}$, the term $A_i(\eta)$ is a homogeneous polynomial of order
  $i$ in $\eta$
   defined iteratively by:
\begin{align*}
A_1(\eta) & = \eta, \nonumber \\
A_{2i+1}(\eta) & = -\prjcyc \Bigg(  \sum_{k=1}^i \frac{(2k-1)!!}{(2k)!!(2k\!+\!1)} 
\sum_{ \substack{\text{odd } \alpha_1,\dots,\alpha_{2k+1}  \text{ s.t.}\\
		\alpha_1+\dots+\alpha_{2k+1}=2i+1}} \!\!\!\!\!\!\!\!\!\!\!
A_{\alpha_1}(\eta)\circ \cdots \circ A_{\alpha_{2k+1}}(\eta)\Bigg). 
\end{align*}
\end{enumerate}
\end{theorem}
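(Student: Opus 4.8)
The plan is to treat the three statements as a single application of the inverse function theorem in several complex variables, together with a quantitative estimate that tracks the size of the domain on which $\FCmap$ is invertible. The backbone of the argument is the decomposition $\FCmap(\phi) = \phi + \prjcyc(\arcsin(\phi) - \phi)$, where the second term collects all the nonlinearity. Since $\arcsin(x) - x = \sum_{i\ge 1} \frac{(2i-1)!!}{(2i)!!(2i+1)}x^{2i+1}$ starts at order three, the map $\FCmap$ is a small perturbation of the identity near the origin, and the function $h$ is precisely the radius bookkeeping for how large a polydisk we can push through. Concretely, I would first record the scalar estimate: for $|x|\le x_0$ one has $|\arcsin(x) - x| \le \arcsin(x_0) - x_0$, and then relate $\arcsin(x_0) - x_0$ to $h$ via the substitution $x_0 = \sin(\gamma^*)$, $\cos(\gamma^*) = h^{-1}(\|\eta\|_\infty)/(h^{-1}(\|\eta\|_\infty)+1)$; this is where the odd-looking closed form of $h$ comes from, and checking this identity is the first concrete computation.

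For statement~\ref{p1:existence-G}, I would set up a Banach fixed-point argument directly: a solution of $\FCmap(\phi) = \eta$ is a fixed point of the map $T(\phi) = \eta - \prjcyc(\arcsin(\phi) - \phi)$. On the closed polydisk $\mathrm{D}^{\complex}(\vect{0}_m, \sin(\gamma^*))$, the choice of $\gamma^*$ guarantees that $\|T(\phi)\|_\infty \le \|\eta\|_\infty + \|\prjcyc\|_\infty(\arcsin(\sin\gamma^*) - \sin\gamma^*) \le \sin(\gamma^*)$, so $T$ maps the polydisk into itself; the hypothesis $\|\eta\|_\infty < h(\|\prjcyc\|_\infty)$ is what makes this radius strictly less than one (so that we stay inside the domain of $\arcsin$) and also gives the strict contraction constant, since the derivative of $\arcsin(x)-x$ is $\frac{1}{\sqrt{1-x^2}}-1$, bounded by a constant $<1$ on the polydisk of radius $\sin\gamma^*<1$. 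Uniqueness within that polydisk is then immediate from the contraction property. One subtlety here: $\prjcyc$ is an oblique projection, not orthogonal, so its operator norm $\|\prjcyc\|_\infty$ can exceed $1$; that is exactly why the threshold is $h(\|\prjcyc\|_\infty)$ rather than $h(1)$, and the monotonic decrease of $h$ makes a larger $\|\prjcyc\|_\infty$ a stronger restriction.

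For statement~\ref{p3:inverse-G}, I would invoke the holomorphic inverse function theorem: $\FCmap$ is holomorphic on a neighborhood of the origin with $D\FCmap(\vect{0}_m) = I_m$ (since the Jacobian of $\prjcyc(\arcsin(\phi)-\phi)$ vanishes at $\phi = 0$), hence it is a biholomorphism from a neighborhood of $0$ onto a neighborhood of $0$. The work is to show that $\mathrm{D}^{\complex}(\vect{0}_m,\|\eta\|_\infty)$ lies in that neighborhood; this follows from statement~\ref{p1:existence-G} applied not just to $\eta$ but to every $\xi$ with $\|\xi\|_\infty \le \|\eta\|_\infty$ — for each such $\xi$ there is a unique preimage in $\mathrm{D}^{\complex}(\vect{0}_m, \sin\gamma^*)$, and since the Jacobian stays invertible on that polydisk (its nonidentity part has norm $<1$ there), the local inverse glues into a single holomorphic map $\FCmap^{-1}$ on $\mathrm{D}^{\complex}(\vect{0}_m,\|\eta\|_\infty)$ with the stated two-sided inverse property.

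For statement~\ref{p4:power_series-G}, I would obtain the series by the classical method of undetermined coefficients: write $\FCmap^{-1}(\xi) = \sum_{i\ge 0} A_{2i+1}(\xi)$ as a formal power series in $\xi$ with $A_{2i+1}$ homogeneous of degree $2i+1$ (only odd degrees survive because $\FCmap$ is an odd map), substitute into $\FCmap(\FCmap^{-1}(\xi)) = \xi$, expand $\arcsin$ using its Taylor series, and match homogeneous components. The degree-one match gives $A_1(\eta) = \eta$; the degree-$(2i+1)$ match, after isolating the term $A_{2i+1}$ that appears linearly through the $\prjcut\phi$ piece and through the degree-one part of $\arcsin$, and noting $\prjcut + \prjcyc = I_m$, yields exactly the stated recursion with the combinatorial sum over compositions $\alpha_1 + \cdots + \alpha_{2k+1} = 2i+1$ into odd parts (these index the ways a degree-$(2k+1)$ monomial in $\arcsin$ can be built from the components $A_{\alpha_j}$). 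Strong convergence then follows because, by statements~\ref{p1:existence-G}–\ref{p3:inverse-G}, $\FCmap^{-1}$ is holomorphic on the polydisk $\mathrm{D}^{\complex}(\vect{0}_m,\|\eta\|_\infty)$, and a function holomorphic on a polydisk has a multivariate Taylor series that converges absolutely (hence strongly, i.e.\ under every rearrangement) on that polydisk; this is the standard Cauchy-estimate/Abel-lemma fact for Reinhardt domains. The main obstacle I anticipate is verifying the $h$-identity cleanly and confirming that the contraction radius $\sin\gamma^*$ is exactly the threshold at which $T$ self-maps — that is, checking that $\arcsin(\sin\gamma^*) - \sin\gamma^* = \frac{h(\|\prjcyc\|_\infty) - \|\eta\|_\infty}{\|\prjcyc\|_\infty}$ or the analogous balance forced by the definition of $\gamma^*$ — since the rest is a routine, if bookkeeping-heavy, application of standard complex-analytic machinery.
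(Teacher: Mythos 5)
Your proposal is correct and takes essentially the same route as the paper: the same fixed-point map $\phi\mapsto \eta-\prjcyc(\arcsin(\phi)-\phi)$ on $\mathrm{D}^{\complex}(\vect{0}_m,\sin\gamma^*)$ for statement (i) (the identity you were after is $\|\eta\|_\infty=(h^{-1}(\|\eta\|_\infty)+1)\sin\gamma^*-h^{-1}(\|\eta\|_\infty)\gamma^*$, i.e.\ $\gamma^*-\sin\gamma^*=(\sin\gamma^*-\|\eta\|_\infty)/h^{-1}(\|\eta\|_\infty)$), the inverse function theorem for holomorphy of $\FCmap^{-1}$ in (ii) (the paper checks invertibility of $D_\phi\FCmap$ via a weighted-Laplacian kernel argument, while your Neumann-series bound on the non-identity part is an equally valid shortcut), and undetermined coefficients plus holomorphy on the Reinhardt polydisk for the recursion and strong convergence in (iii). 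The only slip is the contraction estimate: the Lipschitz constant of your map is $\|\prjcyc\|_\infty\,\sup_{|z|\le\sin\gamma^*}\bigl|1/\sqrt{1-z^2}-1\bigr|\le\|\prjcyc\|_\infty(\sec\gamma^*-1)=\|\prjcyc\|_\infty/h^{-1}(\|\eta\|_\infty)<1$, so the factor $\|\prjcyc\|_\infty$ must be retained (bounding the $\arcsin$ derivative alone by $1$ is not sufficient, and can even fail when $\prjcyc=\vect{0}$ and $\gamma^*$ is close to $\pi/2$); this is a one-line repair, and the paper itself only verifies the self-map property before invoking the Banach fixed-point theorem, so your explicit attention to the contraction is, if anything, a strengthening.
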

\begin{proof}
Regarding statement~\ref{p1:existence-G}, define the map
$\map{H_{\eta}}{\mathrm{D}^{\complex}(\vect{0}_m,\sin(\gamma^*))}{\real^m}$
by
\begin{align*}
  H_{\eta}(\phi) = \eta - \prjcyc(\arcsin(\phi) -\phi ).
\end{align*}
The map $H_{\eta}$ appears from bringing all terms of the unconstrained edge
equation~\eqref{eq:cut-flow} to the left hand side and adding $\phi$ to
both sides.

First, we show that
$H_{\eta}(\mathrm{D}^{\complex}(\vect{0}_m,\sin(\gamma^*))) \subseteq
\mathrm{D}^{\complex}(\vect{0}_m,\sin(\gamma^*))$. For $\phi\in
\mathrm{D}^{\complex}(\vect{0}_m,\sin(\gamma^*))$, we compute
\begin{align*}
\|H_{\eta}(\phi)\|_{\infty} = \|\eta + \prjcyc(\arcsin(\phi) -\phi
  )\|_{\infty}  \le \|\eta\|_{\infty} + \|\prjcyc\|_{\infty}
  \|\phi- \arcsin(\phi)\|_{\infty}. 
\end{align*}
Moreover, for $\phi\in \mathrm{D}^{\complex}(\vect{0}_m,\sin(\gamma^*))$,
we have $\|\phi- \arcsin(\phi)\|_{\infty} \le \gamma^* -
\sin(\gamma^*)$. These equalities imply that
\begin{multline}\label{eq:important}
\|H_{\eta}(\phi)\|_{\infty}\le \|\eta\|_{\infty} + \|\prjcyc\|_{\infty}
  (\gamma^* - \sin(\gamma^*)) \\ \le \|\eta\|_{\infty} + h^{-1}(\|\eta\|_{\infty})
  (\gamma^* - \sin(\gamma^*)),
\end{multline}
where, for the last inequality, we used the fact that $\|\prjcyc\|_{\infty}
\le h^{-1}(\|\eta\|_{\infty})$. By the definition of $h$, we have
\begin{multline*}
\|\eta\|_{\infty} =
  (h^{-1}(\|\eta\|_{\infty})+1)\sqrt{1-\left(\frac{h^{-1}(\|\eta\|_{\infty})}{h^{-1}(\|\eta\|_{\infty})+1}\right)}
  \\ - h^{-1}(\|\eta\|_{\infty}) \arccos\left(\frac{h^{-1}(\|\eta\|_{\infty})}{h^{-1}(\|\eta\|_{\infty})+1}\right).
\end{multline*}
Noting the fact that
$\arccos\left(\frac{h^{-1}(\|\eta\|_{\infty})}{h^{-1}(\|\eta\|_{\infty})+1}\right)
= \gamma^*$, we obtain
\begin{align*}
 \|\eta\|_{\infty} = (h^{-1}(\|\eta\|_{\infty})+1)\sin(\gamma^*) - h^{-1}(\|\eta\|_{\infty})\gamma^*.
\end{align*}
Now, by replacing the above equation into
inequality~\eqref{eq:important}, we have
\begin{align*}
\|H_{\eta}(f)\|_{\infty}& \le  \|\eta\|_{\infty} + h^{-1}(\|\eta\|_{\infty})
  (\gamma^* - \sin(\gamma^*)) \\ & =
  (h^{-1}(\|\eta\|_{\infty})+1)\sin(\gamma^*) -
  h^{-1}(\|\eta\|_{\infty})\gamma^* + h^{-1}(\|\eta\|_{\infty})
  (\gamma^* - \sin(\gamma^*)) \\ &= \sin(\gamma^*).
\end{align*}
Thus, by the Banach Fixed-Point Theorem, there exists a unique fixed point
$\phi^*\in \mathrm{D}^{\complex}(\vect{0}_m,\sin(\gamma^*))$ for
$H_{\eta}$.  By construction, this fixed element $\phi^*\in
\mathrm{D}^{\complex}(\vect{0}_m,\sin(\gamma^*))$ satisfies
\begin{align*}
  \eta =  \prjcyc \arcsin(\phi^*) + \prjcut \phi^* = \FCmap (\phi^*).
\end{align*}
This completes the proof of statement~\ref{p1:existence-G}.

Regarding statement~\ref{p3:inverse-G}, by statement~\ref{p1:existence-G}, for every
$\eta\in \complex^m$ such that $\|\eta\|_{\infty}<
h(\|\prjcyc\|_{\infty})$, there exists a unique $\phi\in \real^m$ such
that $\eta = \FCmap(\phi)$. This implies that $\FCmap$ has a unique
inverse $\FCmap^{-1}:\FCmap(\mathrm{D}^{\complex}(\vect{0}_m,\|\eta\|_{\infty}))\to\mathrm{D}^{\complex}(\vect{0}_m,\|\eta\|_{\infty})$
which satisfies the equalities in statement~\ref{p3:inverse-G}. Now we show that $\FCmap^{-1}$ is holomorphic on
$\FCmap(\mathrm{D}^{\complex}(\vect{0}_m,\|\eta\|_{\infty}))$. Note
that, for every $\phi\in \mathrm{D}^{\complex}(\vect{0}_m,\|\eta\|_{\infty})$, the derivative
of the map $\FCmap$ at point $\phi$ is given by: 
\begin{align*}
D_{\phi}\FCmap = \prjcut+ \prjcyc \diag\Big(\frac{1}{\sqrt{1-\phi_i^2}}\Big).
\end{align*}
We first show that $D_{\phi}\FCmap $ is invertible. 
Suppose that, there exists $\mathbf{x}\in
\real^m$ such that $D_{\phi}\FCmap(\mathbf{x})=0$. This means that $\prjcut \mathbf{x} = \vect{0}_m$ and $\prjcyc
\diag\Big(\frac{1}{\sqrt{1-\phi_i^2}}\Big)\mathbf{x} = \vect{0}_m$. The first
equality implies that $\mathbf{x}\in \Ker(B\mathcal{A})$ and the second
inequality implies that $\diag\Big(\frac{1}{\sqrt{1-\phi_i^2}}\Big)\mathbf{x}
\in \Img(B^{\top})$. Therefore, there exists $\alpha\in
\vect{1}_n^{\perp}$ such that
$\diag\Big(\frac{1}{\sqrt{1-\phi_i^2}}\Big)\mathbf{x} = B^{\top}\alpha$. Thus,
we get 
\begin{align}\label{eq:laplacian}
B \mathcal{A} \diag(\sqrt{1-\phi_i^2}) B^{\top}\alpha = B \mathcal{A} \mathbf{x} = \vect{0}_n.
\end{align}
Moreover, $\mathcal{A} \diag(\sqrt{1-\phi_i^2})$ is a diagonal matrix with positive
diagonal elements. Therefore, equations~\eqref{eq:laplacian} implies that $\alpha \in \mathrm{span}\{\vect{1}_n\}$ and as a result $\mathbf{x} = \diag(\sqrt{1-\phi_i^2})
B^{\top}\alpha = \vect{0}_n$. This proves that the derivative $D_{\phi}\FCmap$ is
invertible. Now, by the Inverse Function Theorem~\cite[Theorem 2.5.2]{RA-JEM-TSR:88}, the maps
$\FCmap $ and $\FCmap^{-1}$ are locally holomorphic and therefore they
are holomorphic on their domains. This completes
the proof of statement~\ref{p3:inverse-G}.

Regarding statement~\ref{p4:power_series-G}, we first find the formal power
series representation for $\FCmap^{-1}$. Suppose that
$\sum_{i=1}^{\infty} A_{i}(\xi)$ is the formal power series for
$\FCmap^{-1}$. Then we have
\begin{align*}
\prjcyc \arcsin(\FCmap^{-1}(\eta)) + \prjcut
  \FCmap^{-1}(\eta) = \eta, \qquad \mbox{ for all } \eta\in \real^m. 
\end{align*}
By replacing the power series $\sum_{i=1}^{\infty} A_{i}(\xi)$ for
$\FCmap^{-1}$ and using the power series expansion of $\arcsin$, we obtain 
\begin{align}\label{eq:series_equality}
\prjcyc \left(\sum_{k=1}^{\infty} \frac{(2k-1)!!}{(2k)!!(2k+1)}
  \big(\sum_{i=1}^{\infty} A_i(\eta)\big)^{\circ (2k-1)} \right)+ \prjcut
  \sum_{i=1}^{\infty} A_i(\eta) = \eta. 
\end{align}
By equating the same order terms on the both side of
equation~\eqref{eq:series_equality} and using the fact that $\prjcyc +
\prjcut=I_m$, we obtain that $A_{1}(\eta) = \eta$ and $A_{2i}(\eta) = \vect{0}_m$, for every $i\in
\Z_{\ge 0}$. Simple book-keeping shows that the recursive
formula in statement~\ref{p4:power_series-G} holds for the odd terms in the power series. 

Finally, we prove that the formal power series
$\sum_{i=1}^{\infty}A_{2i+1}(\eta)$ converges on the domain
$\mathrm{D}^{\complex}(\vect{0}_m,\eta)$. Note that
statement~\ref{p3:inverse-G} implies that the map
$\map{\FCmap^{-1}}{\mathrm{D}^{\complex}(\vect{0}_m,\eta)}{\FCmap^{-1}(\mathrm{D}^{\complex}(\vect{0}_m,\eta))}$
is holomorphic and and that the set
$\mathrm{D}^{\complex}(\vect{0}_m,\eta)$ is a Reinhardt domain. Therefore,
\cite[Theorem 2.4.5]{LH:90} implies that the power series converges
strongly on the domain $\mathrm{D}^{\complex}(\vect{0}_m,\eta)$.
\end{proof}

\begin{remark}[\textbf{Properties of the real edge balance map}]
  A similar result as Theorem~\ref{thm:property-G-complex} holds for the
  real edge balance map $\Fmap$ by replacing the complex variables by their
  real counterparts. The proof is straightforward by restricting the
  results in Theorem~\ref{thm:property-G-complex} to the real Euclidean
  space.
\end{remark}

\section{Inverse Taylor expansion for Kuramoto
  model}\label{sec:syncOfKuramoto}

In this section we study the synchronization of the Kuramoto
model~\eqref{eq:kuramoto_model} by applying the results on the
unconstrained edge balance equations~\eqref{eq:cut-flow} from
Theorem~\ref{thm:property-G-complex} in the previous section.

\begin{theorem}[\textbf{Inverse Taylor expansion}]
  \label{thm:suff_condition+power_series}
Consider the Kuramoto model~\eqref{eq:kuramoto_model} with undirected connected graph $G$,
weighted cutset projection $\prjcut$, and weighted cycle projection
$\prjcyc$. Given frequencies $\omega\in \vect{1}_n^{\perp}$ satisfying
\begin{align}\label{test:powerSeries}\tag{T0}
\|B^{\top}L^{\dagger}\omega\|_{\infty} < h(\|\prjcyc\|_{\infty}),
\end{align} 
define $\gamma^*\in[0,\tfrac{\pi}{2})$ by
\begin{align*}
  \gamma^* = \arccos
  \left(\frac{h^{-1}(\|B^{\top}L^{\dagger}\omega\|_{\infty})}{ h^{-1}(\|B^{\top}L^{\dagger}\omega\|_{\infty})+ 1}\right).
\end{align*}
Then the following statements hold:
\begin{enumerate}
\item\label{p1:ex} there exists a unique locally stable synchronization
  manifold $\mathbf{x}^*$ in $S^{G}(\gamma^*)$; and
\item\label{p3:series} the power series
  \begin{align}\label{eq:inverse_power_series}
    \sum_{i=0}^{\infty} A_{2i+1}(B^{\top}L^{\dagger}\omega) =
    A_1(B^{\top}L^{\dagger}\omega)+A_3(B^{\top}L^{\dagger}\omega)+\ldots,
  \end{align}
  converges strongly to $\sin(B^{\top}\mathbf{x}^*)$ where, for every $i\in
  \Z_{\ge 0}$, the term $A_i(\eta)$ is a homogeneous polynomial of
  order $i$ in $\eta$ defined iteratively as in Theorem~\ref{thm:property-G-complex}\ref{p4:power_series-G}.
\end{enumerate}
\end{theorem}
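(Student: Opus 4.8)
The plan is to leverage the machinery already assembled: Theorem~\ref{thm:existence_uniqueness_S} gives the equivalence between synchronization manifolds of~\eqref{eq:kuramoto_model} and solutions of the unconstrained edge balance equations~\eqref{eq:cut-flow}, while Theorem~\ref{thm:property-G-complex} supplies both a solvability criterion and a convergent inverse power series for the edge balance map. So the entire argument is essentially a translation: apply Theorem~\ref{thm:property-G-complex} with the specific choice $\eta = B^{\top}L^{\dagger}\omega$, then transport the conclusions back to the Kuramoto model via Theorem~\ref{thm:existence_uniqueness_S}.

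First I would observe that hypothesis~\eqref{test:powerSeries} is exactly the hypothesis $\|\eta\|_{\infty} < h(\|\prjcyc\|_{\infty})$ of Theorem~\ref{thm:property-G-complex} for $\eta = B^{\top}L^{\dagger}\omega$, and the $\gamma^*$ defined here coincides with the $\gamma^*$ there. By Theorem~\ref{thm:property-G-complex}\ref{p1:existence-G} (or rather its real counterpart stated in the Remark, since $\eta$ is real and lies in $\Img(B^{\top})$), there is a unique $\phi^*$ with $\|\phi^*\|_{\infty} \le \sin(\gamma^*)$ solving $\Fmap(\phi^*) = \eta$, i.e.\ solving the unconstrained edge balance equations~\eqref{eq:cut-flow}. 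Then statement~\ref{p5:cut-flow}$\implies$\ref{p1:stable} of Theorem~\ref{thm:existence_uniqueness_S}, applied with $\gamma = \gamma^*$, yields a unique locally exponentially (hence locally) stable synchronization manifold $\mathbf{x}^*$ in $S^{G}(\gamma^*)$; this proves statement~\ref{p1:ex}. Moreover, the ``Moreover'' clause of Theorem~\ref{thm:existence_uniqueness_S} identifies $\phi^* = \sin(B^{\top}\mathbf{x}^*)$, which is the key link for part~\ref{p3:series}.

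For statement~\ref{p3:series}, I would invoke Theorem~\ref{thm:property-G-complex}\ref{p4:power_series-G}: the series $\sum_{i\ge0} A_{2i+1}(\eta)$ converges strongly to $\FCmap^{-1}(\eta)$, with the $A_i$ defined by the stated recursion. But $\FCmap^{-1}(\eta)$ is precisely the unique preimage $\phi^*$ of $\eta$ under $\FCmap$ in the relevant polydisk, and we have just identified $\phi^* = \sin(B^{\top}\mathbf{x}^*)$. Substituting $\eta = B^{\top}L^{\dagger}\omega$ then gives that~\eqref{eq:inverse_power_series} converges strongly to $\sin(B^{\top}\mathbf{x}^*)$, as claimed. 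One should check that $B^{\top}L^{\dagger}\omega$ is real so that the real version of the power series statement applies, and that $\|B^{\top}L^{\dagger}\omega\|_{\infty}$ is indeed in the domain of convergence $\mathrm{D}^{\complex}(\vect{0}_m, \|\eta\|_{\infty})$ in the natural sense (the point $\eta$ lies on the boundary of this polydisk in sup-norm, so one should phrase convergence ``at $\eta$'' carefully, perhaps noting that the Reinhardt-domain argument of Theorem~\ref{thm:property-G-complex} already delivers strong convergence at $\eta$ itself).

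The main obstacle is not any deep new estimate — everything hard has been done in the two prior theorems — but rather the bookkeeping of matching up the various $\gamma$'s, norms, and domains, and in particular making sure the evaluation point $\eta = B^{\top}L^{\dagger}\omega$ sits legitimately within the domain on which Theorem~\ref{thm:property-G-complex} guarantees strong convergence (it lies on the sup-norm boundary of the nominal polydisk, so the statement must be read as convergence at the point $\eta$, which is exactly what part~\ref{p4:power_series-G} provides). A secondary, purely expository point is to make explicit that the real edge balance map $\Fmap$ — and not merely its complexification $\FCmap$ — is what governs the Kuramoto model, so that the solution $\phi^*$ and all the $A_{2i+1}(\eta)$ are real vectors; this is handled by the Remark following Theorem~\ref{thm:property-G-complex}.
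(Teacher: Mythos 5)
Your proposal is correct and follows essentially the same route as the paper: apply (the real version of) Theorem~\ref{thm:property-G-complex} with $\eta=B^{\top}L^{\dagger}\omega$ to solve the unconstrained edge balance equations, transfer back to the Kuramoto model via Theorem~\ref{thm:existence_uniqueness_S}, and invoke Theorem~\ref{thm:property-G-complex}\ref{p4:power_series-G} for the strongly convergent series identified with $\sin(B^{\top}\mathbf{x}^*)$. The only cosmetic difference is that you use the implication \ref{p5:cut-flow}$\implies$\ref{p1:stable} directly, whereas the paper first projects with $\prjcut$ and $\prjcyc$ to pass through the constrained equations and then uses the equivalence of \ref{p1:stable} and \ref{p4:auxiliary}; the content is the same.
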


This theorem is an immediate application of
Theorem~\ref{thm:existence_uniqueness_S} on the equivalent transcriptions
and of Theorem~\ref{thm:property-G-complex} on the properties of the maps
$\FCmap$ and $\Fmap$.
  
\begin{proof}[Proof of Theorem~\ref{thm:suff_condition+power_series}]
  Regarding statement~\ref{p1:ex}, we use (the real version of)
  Theorem~\ref{thm:property-G-complex}\ref{p1:existence-G} with
  $\eta=B^{\top}L^{\dagger}\omega$. Since
  $\|B^{\top}L^{\dagger}\omega\|_{\infty}<h(\|\prjcyc\|_{\infty})$. Therefore,
  there exists a unique $\phi^*\in \mathrm{D}(\vect{0}_m,\sin(\gamma^*))$
  such that $\Fmap(\phi) = B^{\top}L^{\dagger}\omega$. This means that
  \begin{align*}
    B^{\top}L^{\dagger}\omega =  \prjcyc \arcsin(\phi^*) + \prjcut \phi^*.
  \end{align*}
  Since $\Img(B^{\top}) = \Img(\prjcut) = \Ker(\prjcyc)$, we obtain $\prjcut
  \phi^* = B^{\top}L^{\dagger}\omega$ and $\arcsin(\phi)^*\in
  \Img(B^{\top})$. The result follows by the equivalence of
  parts~\ref{p1:stable} and~\ref{p4:auxiliary} in
  Theorem~\ref{thm:existence_uniqueness_S}.  
  
  Regarding statement~\ref{p3:series}, the result follows from (the complex
  version of) Theorem~\ref{thm:property-G-complex}\ref{p4:power_series-G}.
\end{proof}

Some remarks are in order.

\begin{remark}[\textbf{Power series expansion for $\sin(B^{\top}\mathbf{x}^*)$}]
  \label{remark:series}
\begin{enumerate}
\item It is instructive to apply the iterative procedure in
  Theorem~\ref{thm:suff_condition+power_series}\ref{p3:series} to compute
  the first four odd terms in the power
  series~\eqref{eq:inverse_power_series} where $
  \eta=B^{\top}L^{\dagger}\omega $:
  \begin{align*}
    &A_1(\eta)=\eta,\\
    &A_3(\eta)=-\prjcyc \bigg( \frac{1}{6}\eta^{\circ 3} \bigg),\\
    &A_5(\eta)=-\prjcyc \bigg(\frac{1}{12} A_3(\eta)\scirc \eta^{\circ 2}+\frac{3}{40}\eta^{\circ 5} \bigg),\\
    &A_7(\eta)=-\prjcyc \bigg( \frac{5}{112}\eta^{\circ 7} + \frac{3}{8}A_3(\eta)\scirc\eta^{\circ 4} + \frac{1}{2}(A_3(\eta))^{\circ 2}\scirc\eta + \frac{1}{2}A_5(\eta)\scirc\eta^{\circ 2} \bigg).
  \end{align*}
  The iterative procedure in
  Theorem~\ref{thm:suff_condition+power_series}\ref{p3:series} is amenable
  to implementation on a mathematical software manipulation system; we
  report its implementation in Mathematica code in
  Algorithm~\ref{algorithm_series} in Appendix~\ref{app:mathematics_code}.

\item If $\prjcyc$ and $\diag(\eta)$ commute, then, for every $i\in
  \Z_{>0}$:
  \begin{align*}
    A_{2i+1}(\eta) = -\frac{(2i-1)!!}{(2i)!!(2i+1)} \prjcyc\left(\eta\right)^{\circ (2i+1)}.
  \end{align*}
  For example, if the graph $G$ is acyclic, then $\prjcyc =
  \vect{0}_{n\times n}$ and, therefore, $\prjcyc$ and $\diag(\eta)$
  commute. Thus, for acyclic graphs, we have
  \begin{align*}
    A_{1}(\eta) &= \eta, \\ 
    A_{2i+1}(\eta) &= 0 , \qquad\text{for all } i\in \Z_{\ge 0}
  \end{align*}
  so that $\sin(B^{\top}\mathbf{x}^*) =
  B^{\top}L^{\dagger}\omega$. Therefore, the Kuramoto
  model~\eqref{eq:kuramoto_model} on an acyclic graph has a unique locally
  stable synchronization manifold inside $S^G(\gamma)$ if and only if
  $\|B^{\top}L^{\dagger}\omega\|_{\infty}\le \sin(\gamma)$. Moreover, if
  this condition holds, then the synchronization manifold is given by
  \begin{align*}
    \mathbf{x}^* =
    L^{\dagger}B\mathcal{A}\arcsin(B^{\top}L^{\dagger}\omega).
  \end{align*}
  This result is known for example as~\cite[Theorem~2 (Supporting
    Information)]{FD-MC-FB:11v-pnas}.
  
\item While for acyclic graphs we have $\prjcyc =\vect{0}_{n\times n}$, the
  matrix $\prjcyc$ is non-zero and idempotent for cyclic graphs and it satisfies
  $\|\prjcyc\|_{\infty}\ge 1$. The jump from $\norm{\prjcyc}{\infty}$ equals $0$ to values greater than or equal to $1$ can be attributed to the
  discontinuity of the projection matrix $\prjcyc$ with
  respect to edge weights of the graph. The following example shows that
  the infinity norm of the projection matrix $\prjcyc$ is, in general, a
  discontinuous function of the weights of the graphs. Consider the family
  of $3$-cycle graph $\{G(\epsilon)\}_{\epsilon\ge 0}$ with the node set
  $V=\{1,2,3\}$, the edge set $\mathcal{E} =\{(1,2), (1,3), (2,3)\}$, and
  the adjacency matrix $A(\epsilon)\in \real^{3\times 3}$ given by
  \begin{align*}
    A(\epsilon) = \begin{pmatrix}
      0 & 1 & 1 \\
      1 & 0 & \epsilon \\
      1 & \epsilon & 0
    \end{pmatrix}.
  \end{align*}
  Then, for every $\epsilon>0$ , one can show that
  \begin{align*}
    \prjcyc(\epsilon) = I_3 - B^{\top}L^{\dagger}B\mcA = \begin{pmatrix}
      \epsilon & \epsilon & \epsilon \\
      \epsilon & \epsilon & \epsilon \\
      1-2\epsilon & 1-2\epsilon & 1-2\epsilon 
    \end{pmatrix}.
  \end{align*}
  This implies that $\lim_{\epsilon\to 0^{+}}
  \|\prjcyc(\epsilon)\|_{\infty} =3$. However, the graph $G(0)$ is acyclic
  and therefore we have $\prjcyc(0) = \vect{0}_{3\times 3}$. Thus,
  $\lim_{\epsilon\to 0^{+}} \|\prjcyc\|_{\infty}\ne
  \|\prjcyc(0)\|_{\infty}$. This implies that the function $\epsilon
  \mapsto \|\prjcyc(\epsilon)\|_{\infty}$ is not continuous at
  $\epsilon=0$.
\end{enumerate}
\end{remark}

In the rest of this section, we use the power series~\eqref{eq:inverse_power_series} to
propose a family of statistically-accurate approximate tests that are much
less conservative than proven sufficient conditions for existence of a
unique synchronization solution inside $ S^G(\gamma) $ for $
\gamma\in[0,\pi/2) $.  Our approximate tests estimate the solution of
the unconstrained edge balance equations~\eqref{eq:cut-flow} and check that all elements of the estimate are less than or equal
  to $\sin(\gamma)$.  For more insight into these approximate tests, recall
  from Theorem~\ref{thm:suff_condition+power_series}\ref{p3:series}
  that $\phi$, the solution for  unconstrained edge balance
  equations~\eqref{eq:cut-flow} is given by
\begin{align*}
  \phi=B^{\top}L^{\dagger}\omega+A_3(B^{\top}L^{\dagger}\omega)+A_5(B^{\top}L^{\dagger}\omega)+\ldots.
\end{align*}
There already exists a first order approximate synchronization test,
introduced by reference \cite{FD-MC-FB:11v-pnas}, which truncates the
series, given above, after the first order term. By
approximating $\phi$ with $\phi\approx
B^{\top}L^{\dagger}\omega\in\Img(B^{\top})$, we write
\begin{align*} 
  \tag{AT1}\label{test:order1}
  \big\|B^{\top}L^{\dagger}\omega\big\|_{\infty}\le \sin(\gamma),
\end{align*} 
By substituting the third order power series expansion for the edge
variable $\phi\in\real^m$ of the Kuramoto model in Theorem
\ref{thm:suff_condition+power_series}\ref{p3:series} into
equation~\eqref{eq:aux_sync}, we can also write the third order approximate
synchronization test as
\begin{align*}
  \Big\|  B^{\top}L^{\dagger}\omega +
    \frac{1}{6}\prjcyc(B^{\top}L^{\dagger}\omega)^{\circ3} \Big\|_{\infty}
  \leq\sin(\gamma).
\end{align*}
In summary we propose a family of higher order approximate tests as
follows.

\begin{definition} \label{def:approxTest}
  For $\gamma\in[0,\pi/2)$ and $k\in2\Z_{\geq0}+1$, the \emph{$k$th
      order approximate test} for existence of a unique solution in
    $S^{G}(\gamma)$ is defined by
    \begin{align*}\tag{AT$k$}\label{test:approx}
    \Biggnorm{\sum_{i=0}^{(k-1)/2} A_{2i+1}(B^{\top}L^{\dagger}\omega)}{\infty} \leq \sin(\gamma).
    \end{align*}
\end{definition}

\section{Numerical Experiments}\label{sec:numerical} 
In this section we illustrate the usefulness of the Taylor series expansion
given by Theorem~\ref{thm:suff_condition+power_series}. First, for large
IEEE test cases, we illustrate the accuracy of the truncated series for
approximating synchronized solutions of~\eqref{eq:kuramoto_model}. In
addition, we present results showing the sharpness of the approximate
tests~\eqref{test:approx} for existence of a synchronization manifold
\eqref{def:approxTest} on several IEEE test cases and random networks.

\subsection{Accuracy of the Taylor series: approximating the synchronization manifold}\label{sec:num-accuracy1}
Here we evaluate the accuracy of the truncated power series in
Theorem~\ref{thm:suff_condition+power_series}\ref{p3:series} for
approximating the synchronization manifold.  We consider both IEEE test
cases and random networks to evaluate these measures of accuracy.

The general numerical setting for the IEEE test cases is as follows.  Each
IEEE test case can be described by a connected undirected graph $ G $ with
the nodal admittance matrix $ Y\in\complex^{n\times n} $. The set of nodes
in $ G $ are partitioned into load buses $ \mathcal{N}_1 $ and generator
buses $ \mathcal{N}_2 $. The power demand (resp. power injection) at node $
i\in\mathcal{N}_1 $ (resp. $ i\in\mathcal{N}_2 $) is denoted by $ P_i $. $
V_i $ and $ \theta_i $ are the voltage magnitude and phase angle at node $
i\in\mathcal{N}_1\cup\mathcal{N}_2 $. For every IEEE test case, we study the following Kuramoto
synchronization manifold equation
\begin{equation}\label{eq:kuramoto-ieee}
  P_i=\sum_{j\in\mathcal{N}_1\cup\mathcal{N}_2}a_{ij}\sin(\theta_i-\theta_j), \qquad
  \text{for all } i\in\mathcal{N}_1\cup\mathcal{N}_2, 
\end{equation} 
where $ a_{ij} = a_{ji} = V_iV_j\Im(Y_{ij})>0 $. The
equations~\eqref{eq:kuramoto-ieee} are exactly the lossless active AC power
flow equations for the network. Note that in order to study
equations~\eqref{eq:kuramoto-ieee}, we need to apply some modifications to
the IEEE test cases. First, the admittance matrix $ Y \in j\real^{n\times
  n} $ is purely inductive with no shunt admittances $ Y_{ii}=0 $. If the
IEEE test case has branch resistances or shunt admittances, then they are
removed. Second, we assume that all the nodes in the IEEE test case are
$PV$ nodes; this assumption is reasonable since the active power injection
and output voltage of generators are known. For the loads, we use
MATPOWER~\cite{RDZ-CEMS-RJT:11} to solve the coupled AC power flow balance
to obtain their terminal voltage $V_i$. Lastly, for every $i\in
\{1,\ldots,n\}$, we set $ P_i = KP_i^{\mathrm{nom}} $ for some $
K\in\real_{\geq0} $ where $ P_i^{\mathrm{nom}} $ is the nominal injections
given by each test case. Starting with $ K=0 $, we increase $ K $ by $
5\times 10^{-3}$ at each step. MATLAB's \emph{fsolve} is used to solve
equations~\eqref{eq:kuramoto-ieee} for $\theta^*_{\mathrm{fsolve}}$ at each
$ K $. The scalar $ K $ is increased until whichever situation occurs
first: $ \|B^{\top}\subscr{\theta^*}{fsolve}\|_\infty $ reaches $ \pi/2 $
or \emph{fsolve} does not converge to a solution.

To evaluate the accuracy of the truncated Taylor series with $ k $ terms,
we define the absolute error denoted by $ S_k $ by
\begin{equation}\label{eq:series_error}
  S_k = \norm{\sin(B^{\top}\theta^*_{\mathrm{fsolve}}) -  \sum_{i=0}^{k}A_{2i+1}(B^{\top}L^{\dagger}\subscr{p}{sd})}{\infty},
\end{equation}
where $\subscr{p}{sd}=[P_1,\dots,P_n]^{\top}$ is the balanced supply/demand
vector, $L = B \mathcal{A} B^{\top}$, and $\mathcal{A}$ is the diagonal
weight matrix matrix with diagonal elements $\{a_{ij}\}_{(i,j)\in
  \mathcal{E}}$. The errors $ S_k$ for IEEE 300 and Pegase 1354 are shown
in Figure~\ref{fig:matPow_increaseP}.

\begin{figure}[!htb]
  \centering
  \includegraphics[width=\linewidth]{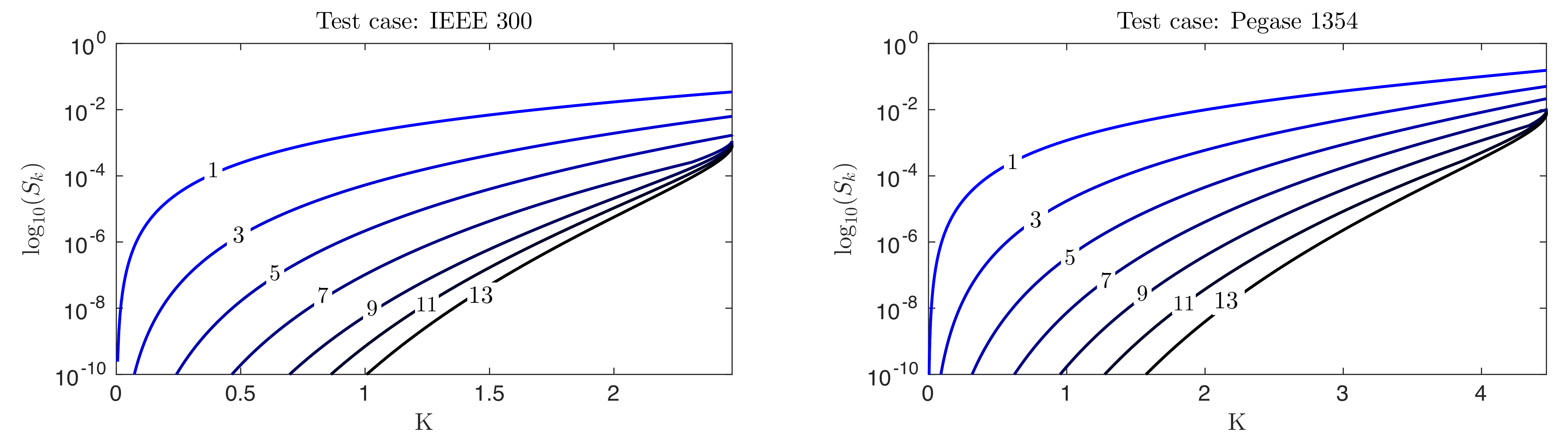}
  \caption{Comparison of the absolute errors of the sine of the phase
    differences approximated with the truncated Taylor series in
    Theorem~\ref{thm:suff_condition+power_series}\ref{p3:series} for all
    odd orders up to 13. The absolute error is calculated with equation
    \eqref{eq:series_error}, using the solution found with \emph{fsolve} as
    the true value. } \label{fig:matPow_increaseP}
\end{figure}

\paragraph{Summary evaluation} Figure~\ref{fig:matPow_increaseP} shows
that, for the IEEE test cases, the error of truncated Taylor series for computing the synchronized
solutions of the Kuramoto model~\eqref{eq:kuramoto-ieee} decreases exponentially with
the order of the truncations and increases as we approach
the threshold of synchronization. For IEEE 300 and Pegase 1354 with the nominal power
injections, the error of approximating the synchronized manifold with
$5$th order or higher truncated series is smaller than $10^{-6}$.

\subsection{Accuracy of the Taylor series: approximating the critical coupling}
In this section, we compare the approximate synchronization
test~\eqref{test:approx} with the existing tests in the literature and
evaluate the accuracy of these approximate tests. We consider both IEEE test
cases and random networks to evaluate these measures of accuracy.

For IEEE test cases, we use the same simulation setup given earlier in
Section~\ref{sec:num-accuracy1}. Denote the critical coupling of
equation~\eqref{eq:kuramoto-ieee} by $\subscr{K}{C}$, that is, let
$\subscr{K}{C}$ be the smallest scaling factor such that
$\|B^{\top}\subscr{\theta^*}{fsolve}\|_\infty $ reaches $ \pi/2 $ or
\emph{fsolve} does not converge to a solution (whichever occurs first).
Let $ \subscr{K}{T} $ denote the smallest scaling factor for which a
particular synchronization test fails. Then we denote the critical ratio by
$ \subscr{K}{T}/\subscr{K}{C} $; this percentage is a measure of the
accuracy of the given test. The conditions are checked with a
$10^{-6}$ tolerance.  Table \ref{tab:IEEE-test-cases} compares the
accuracy of the approximate test with existing sufficient conditions for
synchronization:

\emph{The first two columns} contain the critical ratio of two known
tests from the literature~\eqref{eq:test-eigval2} and~\eqref{eq:test-infty-norm} from~\cite[Theorem 7.2]{FD-FB:13b} and
\cite[Theorem 16]{SJ-FB:16h} respectively:
\begin{align} \tag{T1}
  \lambda_2(L) &> \lambda_\mathrm{critical} \triangleq \| B^{\top} \subscr{p}{sd}\|_2 \label{eq:test-eigval2},
  \\
  \tag{T2}\label{eq:test-infty-norm} 
  \| B^{\top}L^{\dagger} \subscr{p}{sd} \|_\infty &\leq g(\| \prjcut \|_\infty).
\end{align}
Note that test~\eqref{eq:test-infty-norm} is a sufficient condition for existence of a synchronization manifold in $ S^G(\gamma^*) $ where $ \gamma^*=\arccos\Big(\frac{\norm{\subscr{\mcP}{cut}}{\infty}-1}{\norm{\subscr{\mcP}{cut}}{\infty}+1}\Big)\in[0,\pi/2] $ and $ g(x) = \frac{y(x)+\sin(y(x))}{2} - x \frac{y(x)-\sin(y(x))}{2} \bigg\rvert_{y(x)=\arccos(\frac{x-1}{x+1})} $.

\emph{The third column} contains the new sufficient test \eqref{test:powerSeries} proposed in this paper.

\emph{The last four columns} contains the critical ratio for the family of approximate tests~\eqref{test:approx} of order $ 1 $, $ 3 $, $ 5 $ and $ 7 $.

\begin{center}
	\begin{table}[htb] 
		\resizebox{0.99\linewidth}{!}{\setlength{\tabcolsep}{3pt}%
			\begin{tabular}{|l|c|c|c|c|c|c|c|}
				\hline
				\multirow{4}{*}{Test Case}  
				&\multicolumn{7}{c|}{Critical ratio $\subscr{K}{T}/\subscr{K}{C}$}\\
				\cline{2-8}
				& & $\infty$-norm &  & Approx~test & Approx~test & Approx~test & Approx~test
				\\
				&$\lambda_2$ test & test&New test &$k=1$ &$k=3$ &$k=5$ &$k=7$\\
				&\eqref{eq:test-eigval2}\cite{FD-FB:13b}&\eqref{eq:test-infty-norm}\cite{SJ-FB:16h}&\eqref{test:powerSeries}%
				& \eqref{test:order1}\cite{FD-MC-FB:11v-pnas}&\eqref{test:approx}&\eqref{test:approx}&\eqref{test:approx}\\
				\hline
				%
				%
				%
				%
				%
				
				\rowcolor{Gray}
				IEEE 118 &0.23~$\%$&43.76~$\%$&29.91~$\%$&86.12~$\%$&90.80~$\%$&93.10~$\%$&94.45~$\%$\\
				
				IEEE 300 &0.02~$\%$&40.45~$\%$&27.25~$\%$&99.64~$\%$&99.80~$\%$&99.84~$\%$&99.88~$\%$\\
				
				\rowcolor{Gray}
				Pegase 1354 &0.04~$\%$ &34.04~$\%$ &23.94~$\%$ &89.02~$\%$ &97.58~$\%$ &99.61~$\%$ &99.66~$\%$\\
				
				Polish 2383 &0.03~$\%$ &29.49~$\%$ &20.60~$\%$ &84.53~$\%$ &90.62~$\%$ &92.60~$\%$ &93.95~$\%$\\
				\hline
		\end{tabular}}
				\begin{tablenotes}\footnotesize
					\item IEEE test cases from~\cite{ABB-TX-KMG-KSS-TJO:17} and Pegase test case from~\cite{CJ-SF-JM-PP:16}.
				\end{tablenotes}
	\caption{Comparison of the conservativeness of various sufficient conditions with approximate synchronizations
			tests applied to IEEE test cases in the domain $S^{G}(\pi/2)$.}\label{tab:IEEE-test-cases}
	\end{table}
\end{center}


\paragraph{Summary evaluation} Table~\ref{tab:IEEE-test-cases} shows that, for IEEE test cases with scaled nominal power injections, the following statements holds:
\begin{enumerate}
\item the  accuracy of the approximate tests~\eqref{test:approx} increases with
  the order of the tests; 
\item for $\gamma=\frac{\pi}{2}$, the
  fifth and seventh order approximate tests~\eqref{test:approx} improves the accuracy given by the 1st order
  approximate test $ \norm{B^{\top}L^{\dagger}\omega}{\infty}\leq 1$
  \cite{FD-MC-FB:11v-pnas} by up to $9$\%;
\item for $\gamma=\frac{\pi}{2}$, the
  fifth and seventh order approximate tests~\eqref{test:approx}
  improves the best-known sufficient synchronization test in the
  literature~\cite{SJ-FB:16h} by up to $50$\%.
\end{enumerate}

Next, we consider random graph models with randomly generated natural
frequencies.  We setup the numerical analysis to assess the correctness of
the family of approximate tests~\eqref{test:approx} as follows.  Consider
a nominal unweighted random networks $ \{G,\omega\} $, where $G$ is a
connected undirected graph with $ n=80 $ nodes chosen from a parametrized
family of random graph models $\text{RGM}$ and $
\omega\in\vectorones[n]^{\perp} $ are natural frequencies chosen randomly
form sampling distribution $\text{SD}$. Then we study the synchronization
of the Kuramoto model with uniform coupling gain $ K\in\real_{>0} $,
\begin{equation*} \label{eq:kuramoto_K}
\dot{\theta} = \omega - KB\sin(B^{\top}\theta).
\end{equation*} 
The random graph models $\text{RGM}$ and the sampling distributions $\text{SD}$
are given as follows: 
\begin{enumerate}[nolistsep]
	\item
          \emph{Network topology:} For the network topology, we
          consider three types of random graph models $ \text{RGM}
          $. The random graph models we consider are: (i) \ErdosRenyi
          random graph with probability $ p $ of an edge
          existing~\cite{GC-XW-XL:15}, (ii) Random Geometric graph model with
          sampling region $(0,1]^2\subset\real^2$ and connectivity
          radius $ p $~\cite{GC-XW-XL:15}, and (iii) Watts\textendash{}Strogatz small world
          model network with initial coupling to the $ 2 $ nearest neighbors and rewiring probability $ p $ of an edge
          existing~\cite{DJW-SHS:98}. If there exists an edge, then the coupling
          weight is $a_{ij}=a_{ji}=1$. If the graph is not connected, then it is thrown out and a new random graph is generated.
	\item \emph{Natural frequencies:} We consider two types of
          sampling distributions SD. $ n=80 $ random numbers are
          sampled from either a (i) uniform distribution on the interval $ (-1,1) $ or (ii) bipolar distribution $ \{-1,+1\} $
          to obtain $ q_i $ for $ i\in\{1,\dots,n\} $. Then to ensure
          that t he natural frequencies satisfy $ \omega\in\vect{1}_n^{\perp} $, we take $ \omega_i = q_i - \sum_{i=1}^{n}q_i/n $.
	\item \emph{Parametric realizations:} We consider combinations
          of parameters $ (RGM,p,SD) $: the three random graph models,
          15 edge connectivity parameters on the interval  $ p\in(0,1) $, and two sampling distributions. 
\end{enumerate}
For each parametric realization in (iii), we generate $ 100 $ nominal models of $ \{ G,\omega\in\vect{1}_n^{\perp} \} $. For each nominal case, we find the critical coupling, denoted by $ \subscr{K}{C} $, and the smallest coupling where the approximate test fails \eqref{def:approxTest}, denoted by $ \subscr{K}{T} $, for orders $ \{1,3,5,7\} $. $ \subscr{K}{C} $ is found iteratively with MATLAB's \emph{fsolve}. We define the normalized critical coupling ratio of each random case by $ \subscr{K}{C}/\subscr{K}{T} $. The numerically determined values are found with an accuracy of $10^{-3} $. 
Each data point in Figure \ref{fig:series_approxSyncTest} corresponds to the mean of $ \subscr{K}{C}/\subscr{K}{T} $ over $ 100 $ nominal cases of the same parametric realization.

\begin{figure}[!htb]
	\begin{table}[H] \centering
		\begin{threeparttable}
			\begin{tabular}{c|c|c|} 
				&$ \omega $ uniform & $ \omega $ bipolar \\ \hline
				\rotatebox[origin=c]{90}{\ErdosRenyi Graph}
				& \begin{minipage}{.42\textwidth} \includegraphics[width=\textwidth]{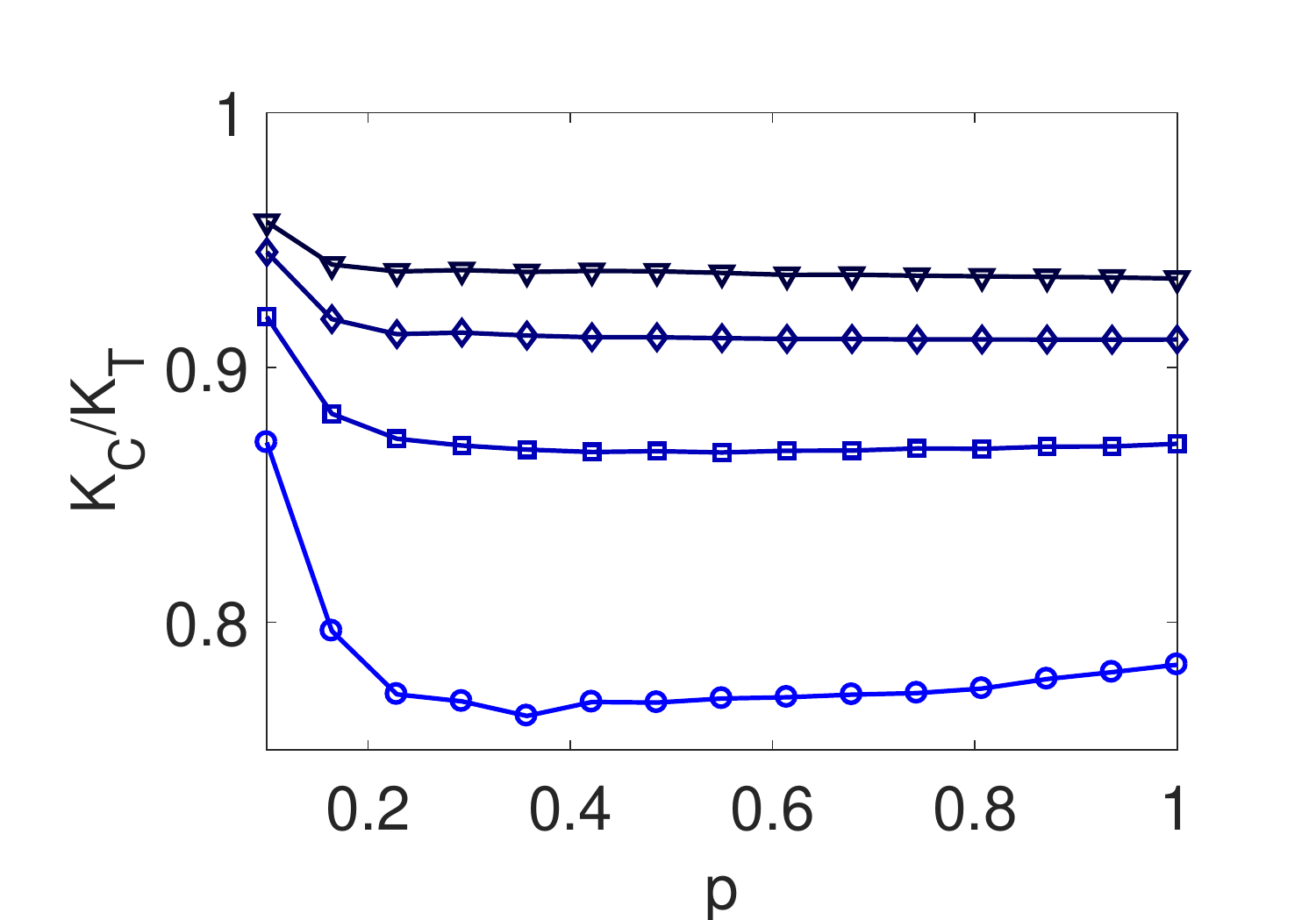} \end{minipage}
				& \begin{minipage}{.42\textwidth} \includegraphics[width=\textwidth]{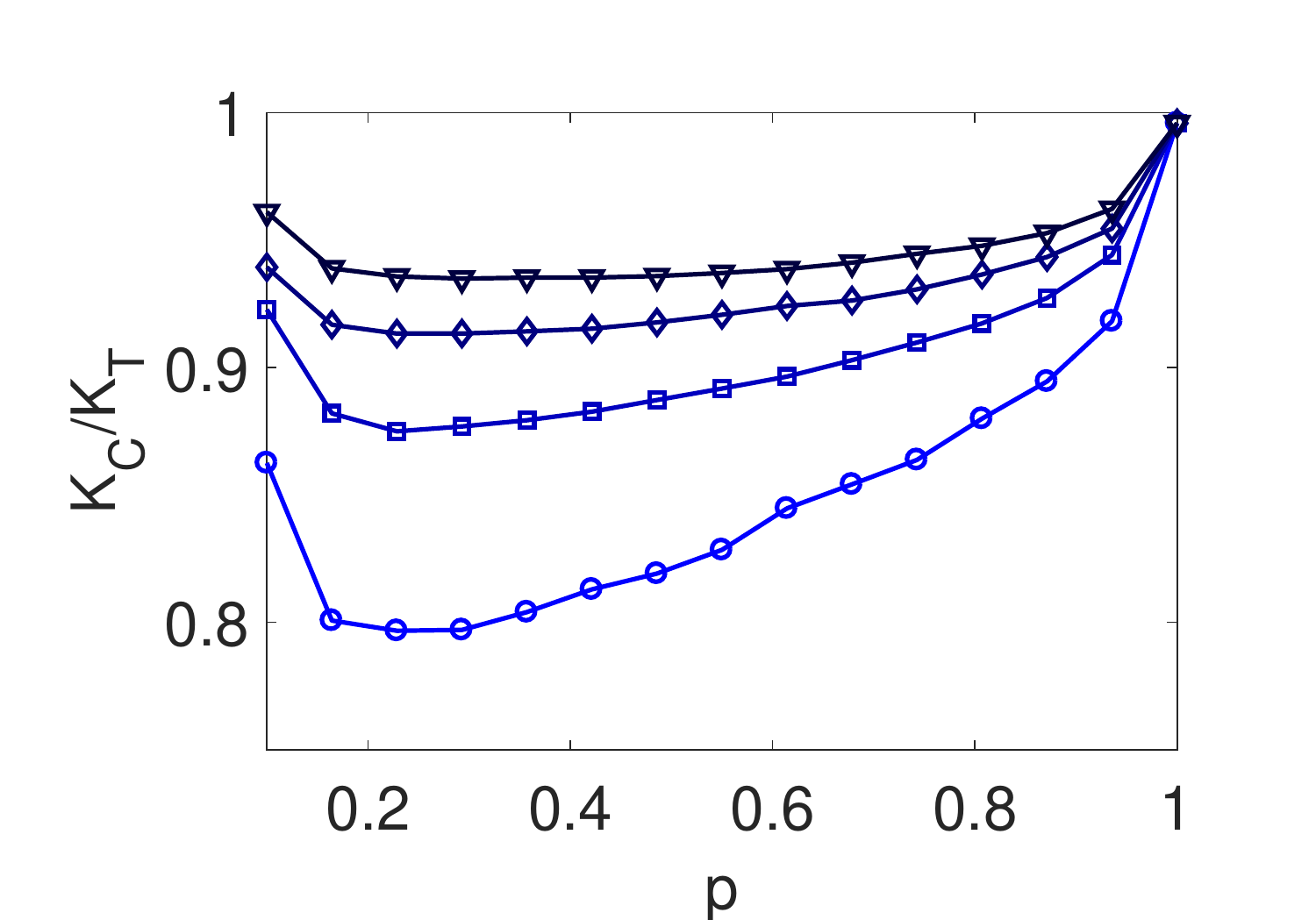} \end{minipage} \\ \hline
				\rotatebox[origin=c]{90}{Rnd. Geom. Graph}
				& \begin{minipage}{.42\textwidth}\includegraphics[width=\textwidth]{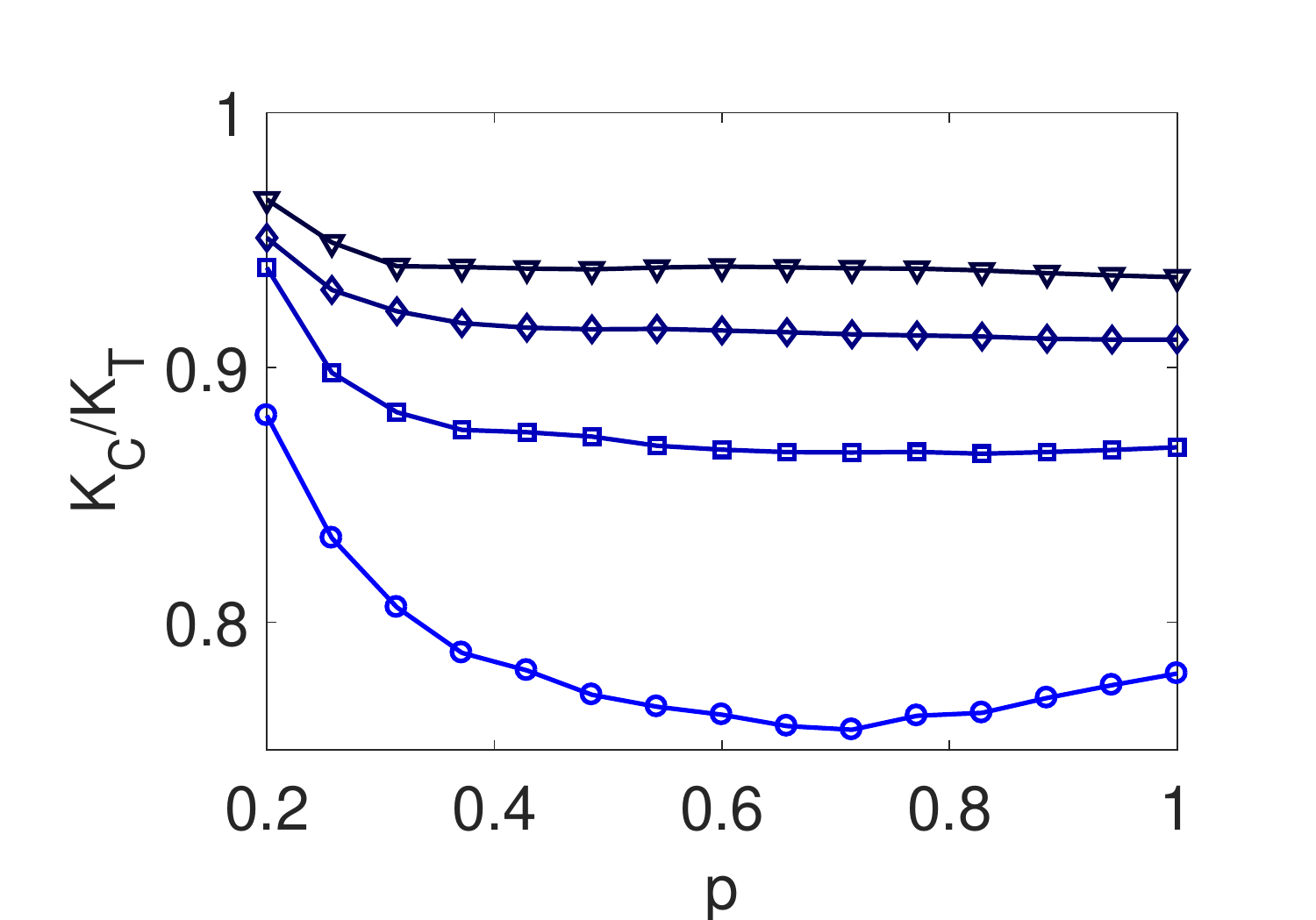} \end{minipage}
				& \begin{minipage}{.42\textwidth}\includegraphics[width=\textwidth]{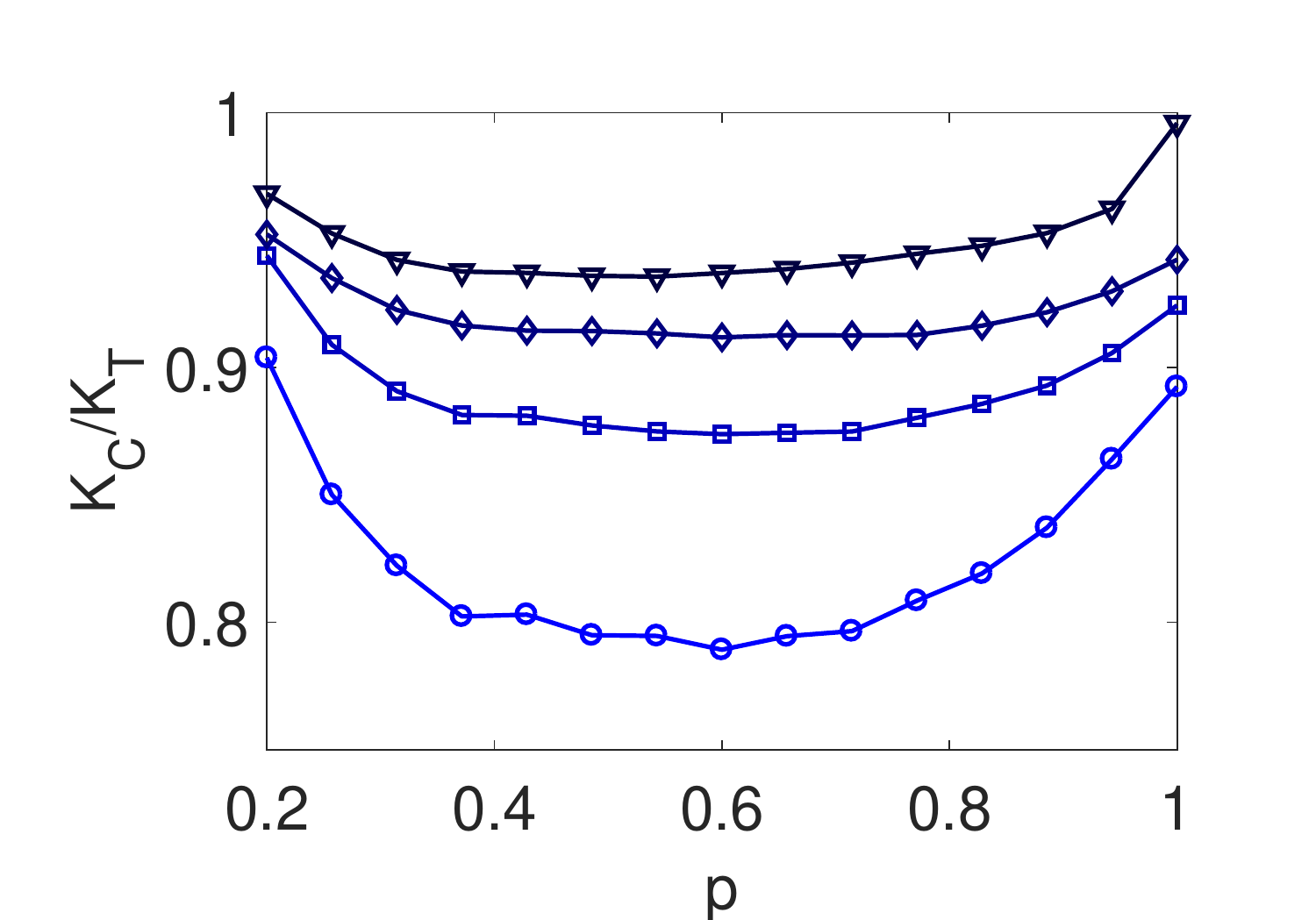} \end{minipage} \\ \hline
				\rotatebox[origin=c]{90}{Small World Ntwk.} 
				& \begin{minipage}{.42\textwidth}\includegraphics[width=\textwidth]{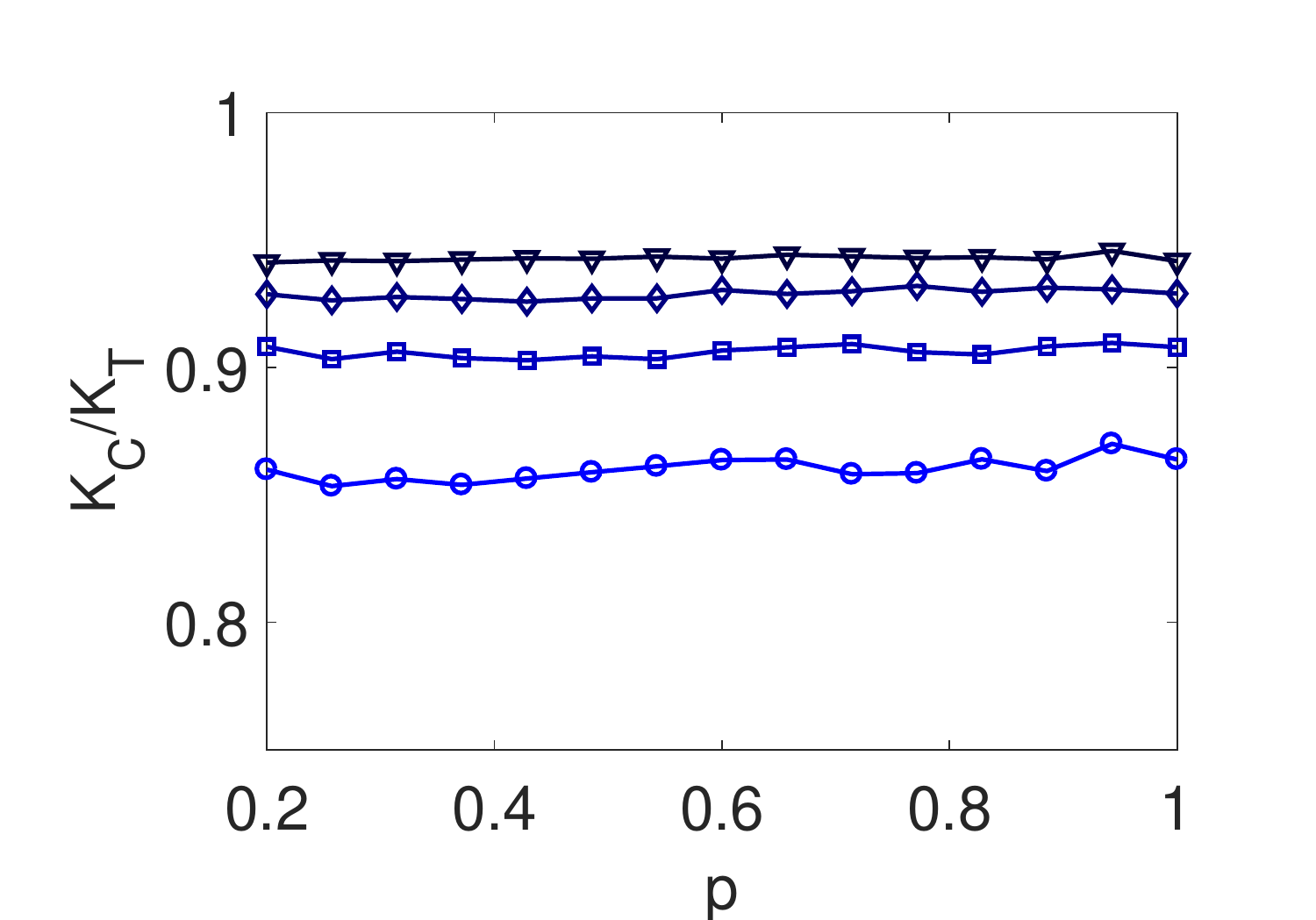} \end{minipage} 
				& \begin{minipage}{.42\textwidth}\includegraphics[width=\textwidth]{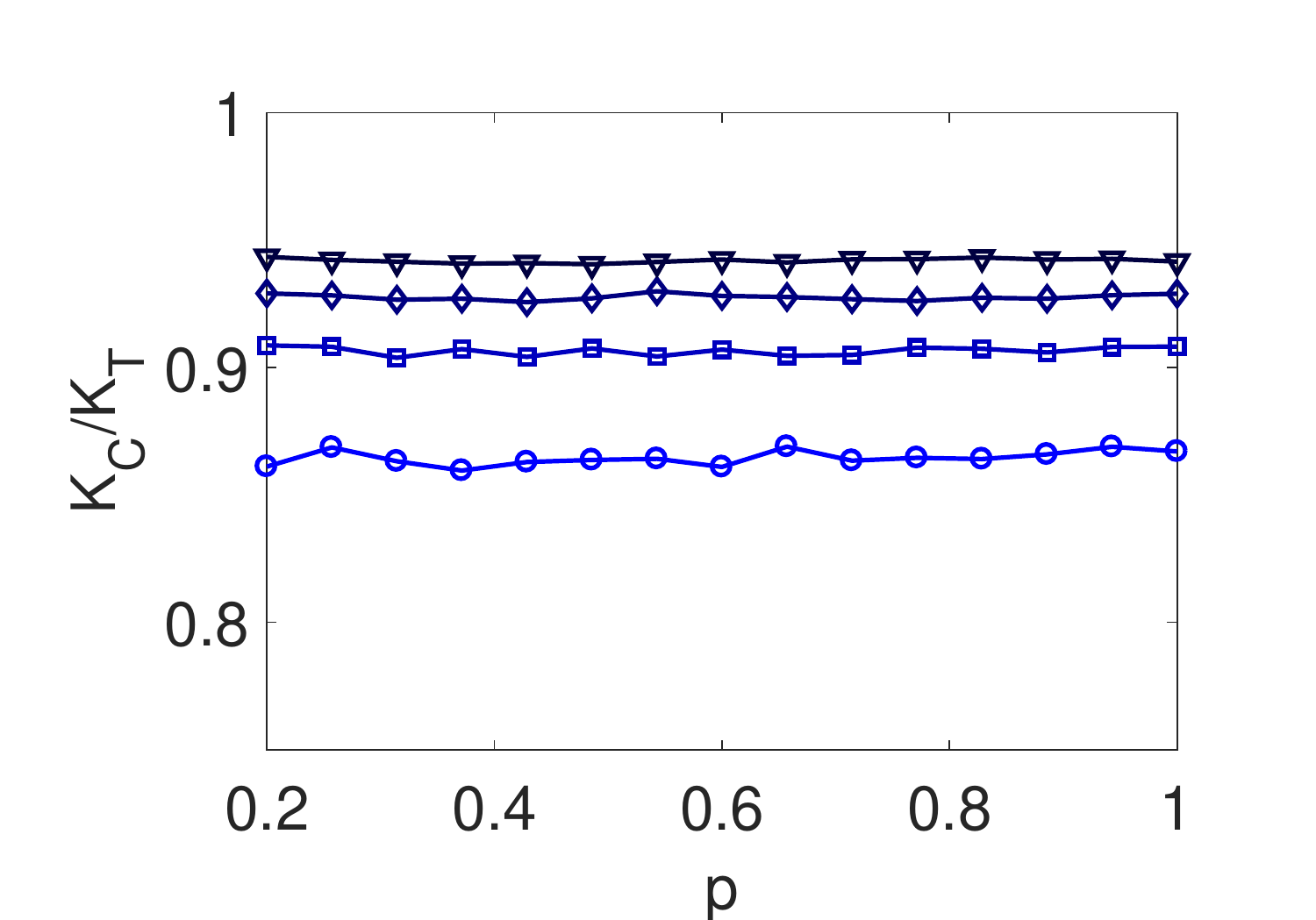} \end{minipage} \\\hline
			\end{tabular}
		\end{threeparttable}
	\end{table} 
	\vspace{-4mm}\hfil\begin{subfigure}{\textwidth} \centering
		\includegraphics[width=0.7\linewidth]{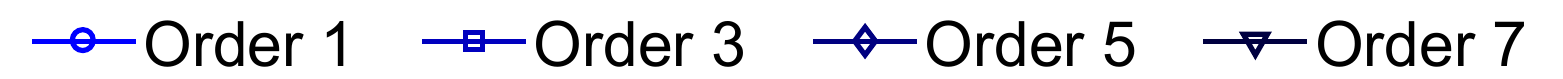}
	\end{subfigure} \vspace{-4mm}\hfil
	\caption{Each data point is the critical ratio $ \subscr{K}{C}/\subscr{K}{T} $ averaged over $ 100 $ random graphs with $n=80$ nodes and in the domain $S^{G}(\pi/2)$. $ \subscr{K}{C}/\subscr{K}{T} $ measures the accuracy of the approximate synchronization tests~\eqref{test:approx}.
		$\subscr{K}{C}$ is the smallest coupling gain such that there exists a solution to the Kuramoto model. $ \subscr{K}{T} $ is an approximation of $\subscr{K}{C}$, estimated using the approximate test~\eqref{test:approx} derived from Theorem~\ref{thm:suff_condition+power_series}\ref{p3:series} for orders $k={1,3,5,7}$.}
	\label{fig:series_approxSyncTest}
\end{figure}

\paragraph{Summary evaluation}
Figure~\eqref{fig:series_approxSyncTest} illustrates that, for random graph models with random natural frequency from
  bipolar and uniform distribution and $\gamma=\frac{\pi}{2}$, the
  accuracy of the approximate test~\eqref{test:approx} consistently improves. In particular, the fifth and seventh order approximate
  tests~\eqref{test:approx} improves the accuracy of the 1st order
  approximate test $\norm{B^{\top}L^{\dagger}\omega}{\infty}\leq 1$~\cite{FD-MC-FB:11v-pnas} by up to $30$\%.


\subsection{Computational cost of approximating the synchronization manifold}

Consider a connected graph $G$ with $m$ edges, $n$ nodes, and no self-loops.
Table~\ref{tab:compCost} shows the order of the number of operations
associated with different methods for approximating the
synchronization manifold for the sparse and dense graphs. 

\begin{table}[!htb] \centering
	\begin{threeparttable}
		\begin{tabular}{|c|c|c|c|} \hline
			Method 
			& General
			& Sparse Graphs
			& Dense Graphs \\
			
			&
			& $ \mathcal{O}(n)=\mathcal{O}(m) $
			& $ \mathcal{O}(n^2)=\mathcal{O}(m) $ \\ \hline
			\rowcolor{Gray}Precomputation 
			& $ \mathcal{O}(m^2n) $
			& $ \mathcal{O}(n^3) $
			& $ \mathcal{O}(n^5) $ \\ 
			Series, $ 5 $th order*
			& $ \mathcal{O}(2m^2) $
			& $ \mathcal{O}(2n^2) $
			& $ \mathcal{O}(2n^4) $ \\ 
			\rowcolor{Gray}Series, $ 7 $th order*
			& $ \mathcal{O}( 3m^2) $
			& $ \mathcal{O}(3n^2) $
			& $ \mathcal{O}(3n^4) $ \\ 
			Newton\textendash{}Raphson
			& $ \mathcal{O}(mn^2) $
			& $ \mathcal{O}(n^3) $
			& $ \mathcal{O}(n^4) $ \\ \hline
		\end{tabular}
		\begin{tablenotes}
			\small
			\item * Denotes that the method precomputes the terms $\Bt L^{\dagger}$, $ L^{\dagger}B\mcA $ and $\prjcyc$. The computation complexity of these terms are found in the ``Precomputation'' row.
			\item The computational complexity of $ L^\dagger $ for $ L\in\real^{n\times n} $ is $ \mathcal{O}(n^3) $.
		\end{tablenotes}
	\end{threeparttable}
\caption{Comparison of number of operations required for computing the truncated series and Newton\textendash{}Raphson.}\label{tab:compCost}
\end{table}

For random graph models, we compare the computational time of three different methods for approximating
the synchronization manifold of the Kuramoto model: (i) the series approximation of the
analytical solution from
Theorem~\ref{thm:suff_condition+power_series}\ref{p3:series}, (ii)
Newton\textendash{}Raphson method, and (iii) MATLAB's \emph{fsolve}.

For the simulation setup, we consider the random network
$\{G,\omega\}$ with $ n\geq2 $ nodes,  $ \omega_i \in (-\alpha,\alpha)
$ for $ i\in\{1,\dots,n\} $, and number of edges $m$ depending on the
coupling parameter $ p\in(0,1)$. The following lists the random graph parameters:
\begin{enumerate}[nolistsep]
\item\emph{Network topology:} To construct the random graph, the \ErdosRenyi random graph model was used with probability $p$ of an edge existing.
	If the graph is not connected, then it is thrown out and a new random graph is generated.
	\item \emph{Coupling weights:} Each edge is given a random coupling weight, $ a_{ij}=a_{ji}>0 $, sampled on the uniform distribution interval $ (0,10) $.
	\item \emph{Natural frequencies:} $ n $ random numbers are
          sampled from a uniform distribution on the interval $
          (-\alpha,\alpha) $ to obtain $ q_i $ for $ i\in\{1,\dots,n\}
          $. Then to ensure that the natural frequencies satisfy $
          \omega\in\vect{1}_n^{\perp} $, we take $ \omega_i = q_i -
          \sum_{i=1}^{n}q_i/n $. $ \alpha $ is chosen to be
          sufficiently small so that the MATLAB \emph{fsolve} converges to a
          solution of the Kuramoto model~\eqref{eq:kuramoto_model}.
	\item \emph{Parametric realizations:} We consider random network
          parametrization $(n,p,\alpha)$ with combinations of $
          n=\{10,20,30,60,120\} $ and $ p=\{0.2,0.4,0.6,0.8\} $.
\end{enumerate}
For each parametrization, we generate $ 3000 $ nominal graphs and $20$ natural frequency vectors for \emph{each} random graph.
The results of the execution time for various methods are shown in Figure \ref{fig:compTime_multw} where each point is the computational time for a particular method averaged over 3000 graphs and $ 20 $ natural frequency vectors $ \omega\in\vectorones[n]^{\perp} $ per graph.
The computation time for the series approximation is the total time to complete the calculations for $ 1 $ random nominal graph with $ 20 $ different natural frequency vectors. This time \emph{does not} include the computation time for the precomputed terms listed in Table \ref{tab:compCost}. The initial guess for Newton\textendash{}Raphson and \emph{fsolve} is $ B^{\top}L^{\dagger}\omega $. 

\begin{figure}[!htb]  
	\begin{subfigure}{\textwidth} \centering
		\includegraphics[width=\textwidth]{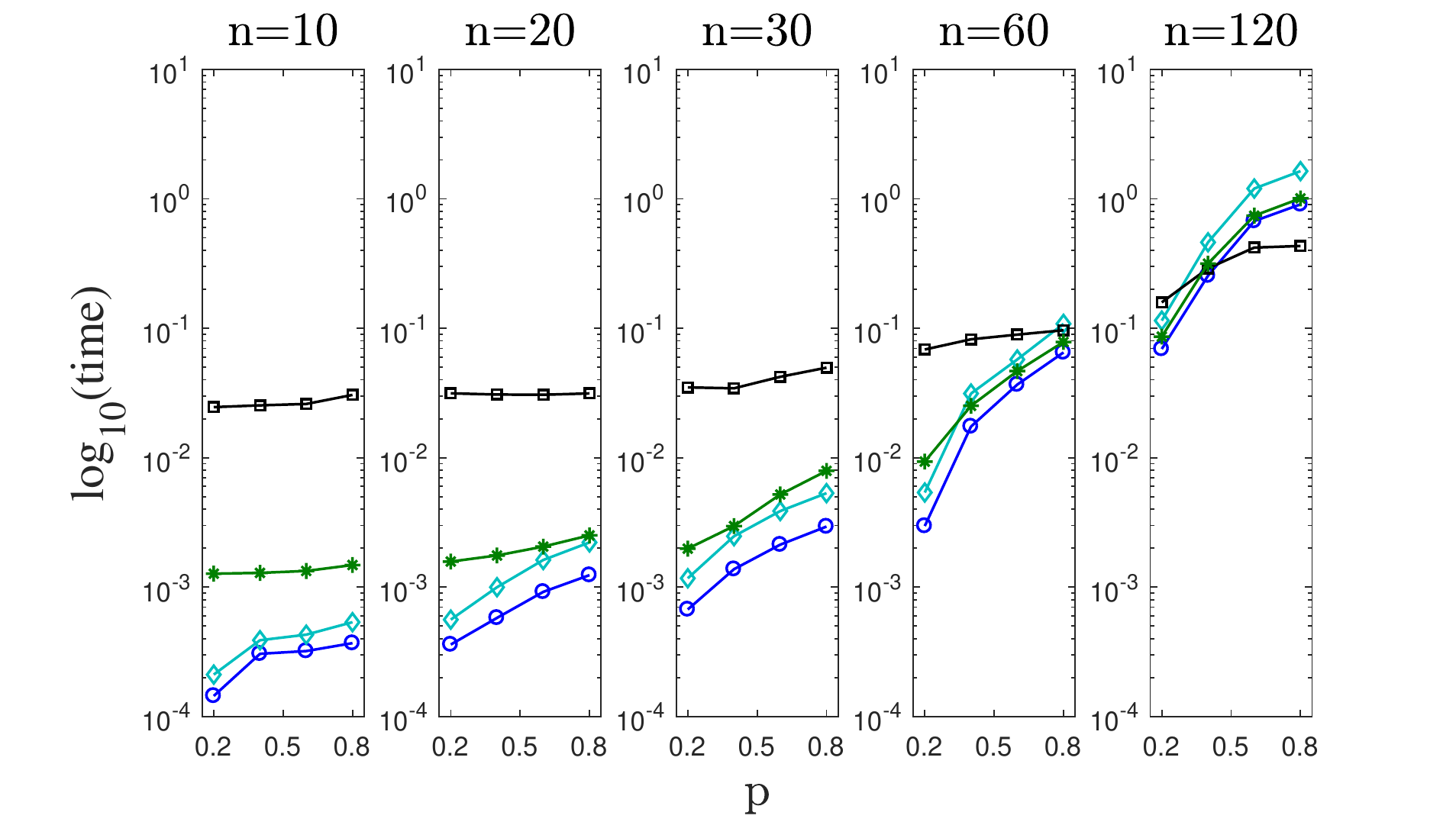}
	\end{subfigure} \begin{subfigure}{\textwidth} 
		\includegraphics[width=\textwidth]{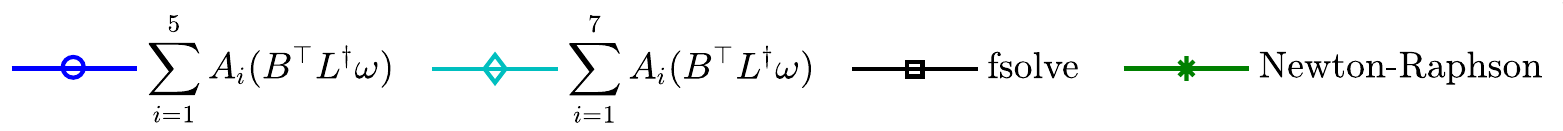}
	\end{subfigure}
\caption{Comparison of computation times for $5$th order truncated
  series, the $7$th order truncated series, MATLAB's \emph{fsolve},
  and Newton\textendash{}Raphson for random graphs, where $p$ is the probability
  of an edge existing for \ErdosRenyi graphs. The computation time is
  how long it takes the various methods to compute the solutions of
  the unconstrained edge balance equations, for $20$ randomly generated natural frequency vector given one randomly generated graph. If a random natural frequency vector does not give a solution, it is thrown out and a new vector is generated. Certain values are precomputed for each graph, but the precomputation time is not included in the graph. Each data point is averaged over $3000$ \ErdosRenyi random graphs. }\label{fig:compTime_multw} 
\end{figure}

\paragraph{Summary evaluation} Figure~\ref{fig:compTime_multw} show that the computation time for
the truncated power series increases with density of the random
graphs. Moreover, the truncated series are more efficient
than Newton\textendash{}Raphson method for small random graphs, while they
are only comparable to Newton\textendash{}Raphson method for large
random graphs. 

For IEEE test cases, we compare the computational time of three
different methods for
calculating the synchronization manifold: (i) the series approximation
of the analytical solution from
Theorem~\ref{thm:suff_condition+power_series}\ref{p3:series}, (ii)
Newton\textendash{}Raphson method, and (iii) MATLAB's \emph{fsolve}. The
setting for the IEEE test cases are the same as the one given in Section~\ref{sec:num-accuracy1}. In this setup we do not precompute any terms and consider one graph topology with its nominal power injections. We use each method to solve for the synchronization manifold $ [\theta^*] $, and average the computation time over $ 10 $ trials.

\begin{table}[!htb] \centering
		\begin{tabular}{|c|c|c|c|c|} \hline 
			Test Case
			& \emph{fsolve} / NR
			& Ord. 5 / NR 
			& Ord. 7 / NR \\ \hline
			\rowcolor{Gray}IEEE 118
			& 4.2072
			& 0.4511
			& 0.4539 \\ 
			IEEE 300
			& 2.6501
			& 0.7546
			& 0.7539 \\ 
			\rowcolor{Gray}Pegase 1354
			& 1.2825
			& 0.8582
			& 0.8633 \\ 
			Polish 2383
			& 1.1279
			& 0.9559
			& 0.9583 \\ \hline
		\end{tabular}
\caption{Computational times of MATLAB's \emph{fsolve}, fifth order series (Ord. 5), and 7th order series
  (Ord. 7) normalized by the computational time for Newton\textendash{}Raphson
  (NR) for IEEE test cases. The computation time is how long it takes
  the various methods to compute the solution of the unconstrained edge
  balance equations, averaged over 10 trials. Certain values are precomputed for each graph, but the precomputation times are not included.}\label{tab:compTime_ieee}
\end{table}

\paragraph{Summary evaluation} The results are found in Table
\ref{tab:compTime_ieee} show that, for IEEE test cases, the
series approximations are computationally comparable to Newton\textendash{}Raphson.

\section{Conclusion} 
This paper proposes a novel equivalent characterization of the equilibrium
equation for the Kuramoto coupled oscillator; we refer to this characterization
as to the unconstrained edge balance equation.  Using this characterization, we
propose a Taylor series expansion for the synchronization manifold of the
Kuramoto network and a recursive formula to symbolically compute all the terms
in the Taylor series. We then use the truncated Taylor series as a tool to (i)
find sharp approximation for the synchronization manifold and (ii) estimate the
onset of frequency synchronization. Our numerical simulations illustrate the
accuracy and computational efficiency of this method on various classes of
random graphs and IEEE test cases.  As future directions, it may be instructive
to employ this series expansion method to study frequency synchronization in
networks consisting of other important oscillators, such as
FitzHugh\textendash{}Nagumo systems. Additionally, it may be viable to adopt the
series expansion approach to tackle more general nonlinear network flow
problems, such as the coupled power flow equations and optimal power flow
problems.

\appendix

\section{Mathematica Code}\label{app:mathematics_code}

In this appendix, we present an implementation of a Mathematica algorithm
to compute the coefficient of the power series expansion given in
Theorem~\ref{thm:suff_condition+power_series}\ref{p3:series}.
 
\begin{algorithm} \caption{Mathematica algorithm to compute terms of series expansion}
	\label{algorithm_series}
	\begin{verbatim}
		permCount[as_]:=Length[Permutations[Flatten[as]]]
		getOddPartition[x1_,x2_]:=Select[IntegerPartitions[x1,{x2}],allOddQ]
		getSymbol[val_]:=Symbol["A"<>ToString[val]]
		A[1]=getSymbol[1]
		A[i_/;OddQ[i]]:=
		   -Pcyc**(Sum[(2k-1)!!/((2k+1)(2k)!!)*
		   Sum[permCount[as]*Product[getSymbol[a],{a,as}],
		   {as,getOddPartition[i,2k+1]}],{k,1,(i-1)/2}])
	\end{verbatim}
\end{algorithm}

It is worth mentioning that the required computations increase
exponentially with the order of the terms. Specifically, computing the
$(2k+1)$th order coefficient of the power series requires finding all the
odd-integer partitions of $2k+1$.


\bibliographystyle{plainurl}
\bibliography{alias,Main,FB}
\end{document}